\newtheorem{remark}{Remark}[section]
\newtheorem{example}{Example}[section]
\newcommand{\AC}{\mathcal{O}\!\!\iota}
\newcommand{\one}{1\!{\mathrm l}}
\def\refe#1{{\rm(\ref{#1})}}
\def\iu{{\rm i}}
\def\Z{{\mathbb Z}}
\def\N{{\mathbb N}}
\def\R{{\mathbb R}}
\def\T{{\mathbb T}}
\def\eps{\varepsilon}
\newcommand\andquad{\quad\hbox{ and}\quad }
\renewcommand\Re{\mathrm{Re}\,}
\newcommand{\nn}{\nonumber}
\def\C{\mathbb{C}}
\def \d {\mathrm{d}}
\newcommand{\inv}{^{-1}}
\def \to {\rightarrow}
\begin{document}

\title{
A-stable time discretizations preserve \\
maximal parabolic regularity
}
\author{Bal\'{a}zs~Kov\'{a}cs\thanks{Mathematisches Institut,
Universit\"at T\"ubingen, Auf der Morgenstelle 10,
D-72076 T\"ubingen, Germany.
\email{Kovacs@na.uni-tuebingen.de} }
\and Buyang~Li\thanks{Department of
Mathematics, Nanjing University,
210093 Nanjing, P.R. China.
\email{buyangli@nju.edu.cn}}
\and Christian~Lubich\thanks{Mathematisches Institut,
Universit\"at T\"ubingen, Auf der Morgenstelle 10,
  D-72076 T\"ubingen, Germany.
  \email{Lubich@na.uni-tuebingen.de}}
}

\date{Received: \today }


\date{}

\maketitle
\begin{abstract} It is shown that for a parabolic problem with maximal $L^p$-regularity (for $1<p<\infty$), the time discretization by a linear multistep method or Runge--Kutta method has maximal $\ell^p$-regularity uniformly in the stepsize if the method is A-stable (and satisfies minor additional conditions). In particular, the implicit Euler method, the Crank-Nicolson method, the second-order backward difference formula (BDF), and the Radau IIA and Gauss Runge--Kutta methods of all orders preserve maximal regularity. The proof uses Weis' characterization of maximal $L^p$-regularity in terms of $R$-boundedness of the resolvent, a discrete operator-valued Fourier multiplier theorem by Blunck, and generating function techniques that have been familiar in the  stability analysis of time discretization methods since the work of Dahlquist. The A($\alpha$)-stable higher-order BDF methods have maximal $\ell^p$-regularity under an $R$-boundedness condition in a larger sector.
As an illustration of the use of maximal regularity in the error analysis of discretized nonlinear parabolic equations, it is shown how error bounds are obtained without using any growth condition on the nonlinearity or for nonlinearities having singularities.

\end{abstract}
\begin{keywords}
Maximal regularity, A-stability,
multistep methods, Runge--Kutta methods,
parabolic equations
\end{keywords}
\begin{AMS}
65M12, 65L04
\end{AMS}

\section{Introduction}
\setcounter{equation}{0}

Maximal regularity is an important
mathematical tool in studying existence, uniqueness
and regularity of the solution of nonlinear parabolic
partial differential equations (PDEs)
\cite{Amann,KW04,LSU68,Lions,Lunardi}.
A generator $A$ of an analytic semigroup
on a Banach space $X$ is said to have
{\it maximal $L^p$-regularity} if the 
solution of the evolution equation
\begin{equation} 
\label{IVP}
    \left\{
    \begin{aligned}
         u'(t) =&\, Au(t) + f(t), \qquad t>0,\\
        u(0) =&\, 0,
    \end{aligned}
    \right.
\end{equation}
satisfies
\begin{align}\label{MaxP}
\|u'\|_{L^p(\R_+;X)}
+\|Au\|_{L^p(\R_+;X)}\leq C_{p,X}
\|f\|_{L^p(\R_+;X)}
\quad
\forall\, f\in L^p(\R_+;X) 
\end{align} 
for some (or, as it turns out, for all) $1<p<\infty$. 
On a Hilbert space, every generator of a bounded analytic semigroup
has maximal $L^p$-regularity \cite{deSimon}, and
Hilbert spaces are the only Banach spaces for which this holds true \cite{KL00}.
Beyond Hilbert spaces, a characterization of the maximal $L^p$-regularity
was given by Weis \cite{Weis1,Weis2} on $X=L^q(\Omega)$ (with $1<q<\infty$ and $\Omega$ a region in $\R^d$) and more generally on UMD spaces
in terms of the $R$-boundedness of the resolvent operator. Operators having maximal $L^p$-regularity include
elliptic differential operators on $L^q(\Omega)$ with general boundary conditions, and
operators that generate a positive and contractive semigroup on $L^q(\Omega, \d\mu)$ spaces for an arbitrary measure space $(\Omega,\d\mu)$, as do many generators of stochastic processes;
see \cite{KW04} and references therein.

In this paper we address the following question: {\it Given an operator $A$ that has maximal $L^p$-regularity, for which (if any) time discretization methods for \eqref{IVP} is the maximal $L^p$-regularity preserved in the discrete $\ell^p$-setting, uniformly in the stepsize?}

We will show that this holds for {\it A-stable} multistep and Runge--Kutta methods, under minor additional conditions.
In particular, the implicit Euler method, the Crank--Nicolson method, the second-order backward difference formula (BDF), and higher-order A-stable implicit Runge--Kutta methods such as the Radau IIA and Gauss methods all preserve maximal regularity.

We recall that a numerical time discretization method is called A-stable if for every complex $\lambda$ with $\Re\lambda\le 0$, for every stepsize $\tau>0$, and for arbitrary starting values, the numerical solution of the scalar linear differential equation $y'=\lambda y$ remains bounded as the discrete time goes to $+\infty$. It is remarkable that this deceivingly simple and well-studied concept, which was introduced by Dahlquist \cite{Dahlquist63}, essentially suffices to yield maximal $\ell^p$-regularity, uniformly in the stepsize for every operator $A$ that has maximal $L^p$-regularity \eqref{MaxP}, both on Hilbert spaces and on a large class of Banach spaces.

Our proofs rely on Weis' characterization of maximal $L^p$-regularity on UMD spaces \cite{Weis1}, on a discrete operator-valued Fourier multiplier theorem of Blunck \cite{Blunck01}, and on generating function techniques for time discretization methods, which have been familiar for linear multistep methods since the work of Dahlquist \cite{Dahlquist,Dahlquist63}, but are less common for Runge--Kutta methods \cite{LO93}.

A characterization of discrete maximal $\ell^p$-regularity
for recurrence relations $u_{n+1}=Tu_n+f_n$ ($n\ge 0$)
was given in \cite{Blunck01,Blunck01-2},
and a generalization of these results to the explicit Euler scheme with certain non-constant time step sequences
was given in \cite{Portal}; also see \cite{ACL}. We are not aware, however, of previous results of discrete maximal $\ell^p$-regularity, uniformly in the stepsize and the number of time steps, for even just the implicit Euler method, let alone for other A-stable time discretizations as studied in this paper.

Maximal $L^p$-regularity of finite element spatial semi-discretizations of parabolic \linebreak PDEs has been used
in the analysis of numerical methods for PDEs
with minimal regularity assumption on the solution
\cite{Gei1,Gei2,Li} or on the diffusion coefficient \cite{LS15}.
In order to prove the convergence of fully discrete solutions
of some nonlinear PDEs,
e.g., the dynamic Ginzburg--Landau equations \cite{Li-GL},
maximal $L^p$-regularity in the
time-discrete setting as given here is needed. 

The paper is organized as follows.

In Section~\ref{section: preliminaries} we recall important notions and results from the theory of maximal parabolic regularity: Weis' characterization of maximal $L^p$-regularity on UMD spaces, $R$-boundedness, and operator-valued Fourier multipliers in a Banach space setting \cite{Weis1,Weis2,Blunck01,KW04}.

In Section~\ref{section: IE and CN} we show discrete maximal $\ell^p$-regularity estimates for two simple one-step methods, the backward Euler method and the Crank--Nicolson scheme. This allows us to show the basic arguments in a technically simpler setting than for the other methods considered in later sections.

In Sections \ref{section: BDF} and \ref{section: RK}  maximal $\ell^p$-regularity results are shown for higher order methods, backward difference formulae (BDF) and A-stable Runge--Kutta methods, respectively. While linear multistep methods have a scalar differentiation symbol in the appearing generating functions, the differentiation symbol of Runge--Kutta methods is matrix-valued, which makes the analysis more complicated.

In Section \ref{section: full discretization} we briefly discuss maximal regularity of full discretizations and show how uniformity of the bounds in both the spatial gridsize $h$ and the temporal stepsize $\tau$ can be obtained.

In Section~\ref{section: ell infty est} we give $\ell^\infty$-bounds that show maximal regularity up to a factor that is logarithmic in the number of time steps considered. These bounds are obtained for a subclass of methods that includes the BDF and Radau IIA methods, but not the Crank-Nicolson and Gauss methods. The $\ell^\infty$-bounds rely on the convolution quadrature interpretation of linear multistep methods \cite{Lubich-cqrevisited} and Runge--Kutta methods \cite{LO93}. A related result has recently been proved for discontinuous Galerkin time-stepping methods in \cite{LB15}, using different techniques.

Finally, in Section~\ref{section: applications} we
illustrate the use of discrete maximal $\ell^p$-regularity in deriving
error bounds for discretizations of nonlinear parabolic PDEs. We show that in contrast to previously existing techniques, the approach via discrete maximum regularity enables us to obtain optimal-order error bounds without any growth condition on the pointwise nonlinearity $f(u,\nabla u)$ and also for nonlinearities having singularities. This becomes possible because via the discrete maximal $\ell^p$-regularity we can
control the $\ell^\infty(W^{1,\infty})$-norm of the error, provided the exact solution of the parabolic problem has sufficient regularity.

\section{Preliminaries}
\label{section: preliminaries}

Here we collect basic results on maximal $L^p$-regularity and related concepts, which will be needed later on. For further background and details, proofs and references we
 refer to the excellent lecture notes by Kunstmann \& Weis \cite{KW04}.

 \subsection{Characterization of maximal $L^p$-regularity in terms of the resolvent}

As was shown by Weis \cite{Weis2}, maximal $L^p$-regularity of an operator $A$ on a Banach space $X$ can be characterized in terms of its resolvent $(\lambda-A)^{-1}=R(\lambda,A)$ for a large class of Banach spaces that includes Hilbert spaces and $L^q(\Omega,\d\mu)$-spaces with $1<q<\infty$. We begin with formulating the notions that permit us to state this fundamental result.

A Banach space $X$ is said to be a {\it UMD space}
if the Hilbert transform
\begin{align*}
Hf(t)={\rm P.V.}\int_{\R}\frac{1}{t-s}f(s)\, \d s
\end{align*}
is bounded on $L^p(\R;X)$ for all $1<p<\infty$; see \cite{KW04}.
From \cite{Burkholder81,Bourgain83}
we know that this definition is equivalent to
the definition by using the
unconditional martingale differences approach, which explains the abbreviation UMD.
Examples of UMD spaces include Hilbert spaces and
$L^q(\Omega,\d\mu)$
and its closed subspaces,
where $(\Omega,\d\mu)$ is any measure space
and $1<q<\infty$. Throughout the paper $X$ always denotes a UMD space, unless otherwise stated.

A collection of operators $\{M(\lambda): \lambda\in \Lambda\}$
is said to be {\it $R$-bounded}
if there is a positive constant $C_R$,
called the $R$-bound of the collection, such that
any finite subcollection of operators
$M(\lambda_1), M(\lambda_2), \dotsc, M(\lambda_l)$
satisfies
\begin{align*}
\int_0^1\bigg\|\sum_{j=1}^lr_j(s)M(\lambda_j)v_j\bigg\|_X^2\d s
\leq C_R^2\int_0^1\bigg\|\sum_{j=1}^lr_j(s)v_j\bigg\|_X^2\d s,
\quad
\forall\,\, v_1,v_2,...,v_l\in X,
\end{align*}
where $r_j(s) = \hbox{sign}\phantom{.}\sin(2j\pi s)$, for $j=1,2, ...$, are the Rademacher functions
defined on the interval $[0,1]$.

In the special case $X=L^q(\Omega,\d\mu)$ a simpler condition suffices:
there, a collection of operators $\{M(\lambda): \lambda \in \Lambda\}$
is $R$-bounded if and only if there is a positive constant $C_R^*$ such that
any finite subcollection of operators
$M(\lambda_1)$, $M(\lambda_2)$, $\dotsc$, $M(\lambda_l)$
satisfies
\begin{align*}
\bigg\|\bigg(\sum_{j=1}^l
|M(\lambda_j)v_j|^2\bigg)^{\frac{1}{2}}\bigg\|_{L^q}
\! \! \! \leq C_R^*\bigg\|\bigg(\sum_{j=1}^l
|v_j|^2\bigg)^{\frac{1}{2}}\bigg\|_{L^q} ,
\quad
\forall\,\, v_1,v_2,...,v_l\in L^q(\Omega,\d\mu) .
\end{align*}
For Hilbert spaces, a collection of operators is $R$-bounded if and only if it is bounded, and the $R$-bound equals its bound.

We can now state Weis' characterization of maximal $L^p$-regularity. Here
$\Sigma_\vartheta$ denotes the sector 
$\Sigma_\vartheta = \{ z\in \C\backslash\{0\}: |\arg z|<\vartheta\}$.

\begin{theorem}[Weis \cite{Weis2}, Theorem 4.2]
\label{MaxR}
Let $X$ be a UMD space and let $A$ be the generator of
a bounded analytic semigroup on $X$.
Then $A$ has maximal $L^p$-regularity
if and only if for some $\vartheta>\pi/2$ the set of operators
          $
                \{ \lambda (\lambda - A)\inv \,:
                \, \lambda \in \Sigma_\vartheta \}
         $ 
is $R$-bounded.
\end{theorem}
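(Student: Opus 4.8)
The plan is to reformulate maximal $L^p$-regularity as the $L^p(\R;X)$-boundedness of a single operator-valued Fourier multiplier, and then to relate the multiplier conditions to $R$-boundedness of the resolvent. Extending \eqref{IVP} from the half-line to all of $\R$ and transforming in time, the solution operator $f\mapsto Au$ becomes the Fourier multiplier with symbol $M(s)=A(\iu s-A)\inv=\iu s(\iu s-A)\inv-I$ for $s\in\R\setminus\{0\}$. Thus $A$ has maximal $L^p$-regularity precisely when $M$ defines a bounded multiplier on $L^p(\R;X)$; this reformulation is available on general Banach spaces, and the role of the UMD hypothesis will be to supply an operator-valued multiplier theorem in the next step.

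For the sufficiency direction I would assume that $\{\lambda R(\lambda,A):\lambda\in\Sigma_\vartheta\}$ is $R$-bounded for some $\vartheta>\pi/2$. Since $\vartheta>\pi/2$, the punctured imaginary axis lies in $\Sigma_\vartheta$, so $\{\iu s(\iu s-A)\inv:s\ne0\}$ is $R$-bounded, and hence so is $\{M(s)\}$. For the derivative one computes $sM'(s)=-\iu s\,A(\iu s-A)^{-2}=-\bigl(\iu s(\iu s-A)\inv\bigr)\bigl(A(\iu s-A)\inv\bigr)$, a product of two $R$-bounded families; using that $R$-bounded sets are stable under sums, scalar multiples and products, with $R$-bounds controlled by those of the factors, both $\{M(s)\}$ and $\{sM'(s)\}$ are $R$-bounded. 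The operator-valued Mikhlin multiplier theorem on UMD spaces then gives boundedness of $M$ on $L^p(\R;X)$, i.e. maximal $L^p$-regularity.

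For the necessity direction I would assume $A$ has maximal $L^p$-regularity, so that $M$ is a bounded multiplier, and proceed in two steps. First, recover $R$-boundedness along the imaginary axis: for any bounded operator-valued multiplier the family of its values at Lebesgue points is $R$-bounded with $R$-bound controlled by the multiplier norm, which one proves by testing on Rademacher-randomized sums of modulated bump functions; this yields $R$-boundedness of $\{\iu s(\iu s-A)\inv:s\ne0\}$. Second, enlarge the axis to a sector: since $A$ generates a bounded analytic semigroup, $R(\lambda,A)$ is analytic in a neighbourhood of the imaginary axis and admits the Neumann expansion $R(\lambda,A)=R(\iu s,A)\sum_{k\ge0}\bigl((\iu s-\lambda)R(\iu s,A)\bigr)^k$ for $\lambda$ close to $\iu s$, and the perturbation stability of $R$-bounds propagates $R$-boundedness from the axis into a full sector $\Sigma_\vartheta$ with $\vartheta>\pi/2$.

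The main obstacle is the operator-valued Mikhlin multiplier theorem itself: it is the substantial analytic input of the whole argument and the point at which the UMD property is indispensable, its proof resting on a vector-valued Littlewood--Paley decomposition available on UMD spaces, and it is precisely there that plain norm bounds on the symbol must be upgraded to $R$-bounds. On the necessity side the two delicate points are the extraction of $R$-boundedness from mere multiplier boundedness, which needs a careful randomized testing argument, and the sector extension, where one must check that the $R$-bounds produced by the Neumann series stay finite uniformly up to an opening angle $\vartheta>\pi/2$.
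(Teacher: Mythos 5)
The paper does not actually prove this theorem: it is imported from Weis, and the only argument the paper supplies is the remark following Theorem~\ref{thm:weis-mult} that the ``if'' direction follows by applying the operator-valued Mikhlin theorem to the symbol $M(\xi)=i\xi(i\xi-A)^{-1}$. Your sufficiency argument is exactly that: the identity $\xi M'(\xi)=-\bigl(i\xi(i\xi-A)^{-1}\bigr)\bigl(A(i\xi-A)^{-1}\bigr)$ combined with stability of $R$-bounds under sums and products is the same computation the paper performs in its discrete analogues (e.g.\ $(1-\zeta)M'(\zeta)=-M(\zeta)+M(\zeta)^2$ in the proof of Theorem~\ref{theorem: IE}), so this half is correct and matches the paper's route. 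The first step of your necessity argument --- extracting $R$-boundedness of $\{i s(i s-A)^{-1}:s\ne 0\}$ from $L^p$-boundedness of the multiplier by testing on Rademacher-randomized modulated bumps --- is also sound; it is the Cl\'ement--Pr\"uss converse and needs no UMD hypothesis.

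The gap is in the final step of necessity, the passage from the imaginary axis to the sector $\Sigma_\vartheta$. The Neumann expansion $R(\lambda,A)=\sum_{k\ge0}(i s-\lambda)^k R(i s,A)^{k+1}$ converges, with summable $R$-bounds for its terms, only when $|i s-\lambda|\,\|R(i s,A)\|<1$; since $\|R(i s,A)\|\gtrsim 1/|s|$, this confines $\lambda$ to a disk about $i s$ of radius a fixed fraction of $|s|$, and the union of these disks over $s\ne0$ is only a conic neighbourhood $\{\pi/2-\delta<|\arg\lambda|<\pi/2+\delta\}$ of the punctured imaginary axis, with $\delta$ determined by the $R$-bound on the axis. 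It does not contain the positive real axis, let alone all of $\Sigma_\vartheta$ --- and the full-sector statement is precisely what the paper invokes in Sections 3--5. To cover the inner sector $|\arg\lambda|\le\pi/2-\delta$ a further argument is required: the standard one represents $T(t)$ and $tAT(t)$ as contour integrals of the resolvent over the boundary of the conic neighbourhood just obtained, deduces $R$-boundedness of $\{T(t),\,tAT(t):t>0\}$ by reasoning as in Lemma~\ref{lemma: abs convex hull}, and then writes $\lambda(\lambda-A)^{-1}=\int_0^\infty\lambda e^{-\lambda t}T(t)\,\d t$ with $\int_0^\infty|\lambda e^{-\lambda t}|\,\d t=|\lambda|/\Re\lambda$ uniformly bounded on the inner sector, invoking the absolute-convex-hull lemma once more. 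Without some such step your proof establishes $R$-boundedness only near the axis, which is strictly weaker than the stated conclusion.
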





\subsection{Operator-valued multiplier theorems}
The ``if" direction of Theorem~\ref{MaxR} is obtained from the following result, which extends a scalar-valued Fourier multiplier theorem of Mikhlin.  Here, $\mathcal{F}$ denotes the Fourier transform on $\R$: for appropriate $f$,
$$
\mathcal{F}f(\xi) =  \int_\R e^{-i\xi t} f(t)\,dt, \qquad \xi\in \R.
$$
$B(X)$ denotes the space of bounded linear operators on $X$.

\begin{theorem}[Weis \cite{Weis2}, Theorem 3.4]
\label{thm:weis-mult}
Let $X$  be a UMD space. Let $M: \R \setminus \{0\} \to B(X )$ be a differentiable function such that the set
$$
\{M(\xi)\,:\,\xi \in\R\setminus\{0\}\} \ \cup\ 
\{\xi M'(\xi)\,:\, \xi  \in\R\setminus\{0\}\} \quad\hbox{is $R$-bounded,}
$$
with $R$-bound $C_R$.
Then, $\mathcal{M} f = \mathcal{F}\inv\bigl( M(\cdot) (\mathcal{F}f)(\cdot)\bigr)$ extends to a bounded operator
$$
\mathcal{M} :
L^p(\R,X) \to L^p(\R,X) \quad\hbox{ for }\ 1 < p < \infty.
$$
Moreover, there exists a constant $C_{p,X}$ independent of $M$ such that the operator norm of
$\mathcal{M} $ is bounded by $C_{p,X}C_R$.
\end{theorem}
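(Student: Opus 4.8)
The statement is the operator-valued Mikhlin multiplier theorem, and the plan is to run the classical Littlewood--Paley proof of the scalar Mikhlin theorem while replacing its two genuinely Hilbertian ingredients---pointwise control of the square function and Plancherel's identity---by the UMD property of $X$ and by $R$-boundedness, respectively. First I would reduce to a convenient setting: by density it suffices to establish $\|\mathcal{M}f\|_{L^p(\R;X)}\le C_{p,X}C_R\|f\|_{L^p(\R;X)}$ for $f$ in a dense class whose Fourier transform is compactly supported away from the origin; and since the final constant is to depend on $M$ only through $C_R$, I may first assume $M$ smooth enough to justify all manipulations and afterwards remove this by approximation. I then fix a smooth dyadic partition of unity, choosing $\phi\in C_c^\infty$ supported in $\{1/2\le|\xi|\le 2\}$ with $\sum_{k\in\Z}\phi(2^{-k}\xi)=1$ for $\xi\ne 0$, and define the Littlewood--Paley projections $\Delta_k$ by $\mathcal{F}(\Delta_k f)(\xi)=\phi(2^{-k}\xi)(\mathcal{F}f)(\xi)$, so that $\mathcal{M}f=\sum_k \mathcal{F}\inv\!\big(M\,\mathcal{F}(\Delta_k f)\big)$.

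The core of the argument is the norm equivalence available in UMD spaces (Bourgain, Zimmermann): with Rademacher functions $(r_k)$ one has
\[
\|g\|_{L^p(\R;X)}\ \sim\ \Big\|\sum_{k} r_k\,\Delta_k g\Big\|_{L^p(\R;\,\mathrm{Rad}(X))}.
\]
Applying this to both $f$ and $\mathcal{M}f$ reduces the theorem to estimating the randomized sum $\sum_k r_k\,M\,\Delta_k f$ against $\sum_k r_k\,\Delta_k f$. On each dyadic annulus the symbol $M$ is nearly constant: writing $M(\xi)-M(\eta)=\int_\eta^\xi \big(sM'(s)\big)\,\frac{\d s}{s}$ shows that its oscillation across a block of logarithmic length $\bigo(1)$ is controlled by the $R$-bound of $\{\xi M'(\xi)\}$, while its value at the block centre $\xi_k$ is controlled by the $R$-bound of $\{M(\xi)\}$. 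I would therefore split $M$ on the $k$-th block into the constant operator $M(\xi_k)$ plus a telescoping correction, sum the correction by parts over $k$, and use that $R$-boundedness is exactly what permits pulling the constant operators $M(\xi_k)$ past the Rademacher sum with the uniform bound $C_R$. Combining these estimates yields operator norm $\le C_{p,X}C_R$.

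I expect the main obstacle to be precisely this transfer step, where UMD and $R$-boundedness must be used jointly: $R$-boundedness is the genuine substitute for the scalar inequality that would be free on a Hilbert space, and the Marcinkiewicz-type summation by parts must be arranged so that only the first derivative $\xi M'(\xi)$ enters and no higher derivative, which is more delicate than in Mikhlin's scalar argument because duality and pointwise majorization are unavailable in a general Banach space. An alternative route I would keep in reserve uses only the boundedness of the Hilbert transform, i.e.\ the very definition of UMD: this yields boundedness of the multipliers that are characteristic functions of intervals, and a Marcinkiewicz decomposition over dyadic intervals then reduces the theorem to such interval multipliers with $R$-bounded operator coefficients, at the cost of heavier bookkeeping but without explicit Rademacher sums.
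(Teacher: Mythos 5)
The paper does not prove this statement: it is Theorem~3.4 of Weis \cite{Weis2}, imported as a black box, so there is no internal proof to compare yours against. Judged against Weis's published argument (see also \cite{KW04}), your outline is the correct and essentially standard strategy: Bourgain's randomized Littlewood--Paley equivalence, valid on $L^p(\R;X)$ precisely because $X$ is UMD, replaces Plancherel, while $R$-boundedness replaces the pointwise operator-norm bounds of the scalar Mikhlin theorem. Your treatment of the symbol on each dyadic block --- value at a reference point plus a variation term written as $\int (sM'(s))\,\d s/s$ over a set of logarithmic length $\bigo(1)$ --- is exactly how the hypothesis on $\xi M'(\xi)$ enters, and the step you flag as the main obstacle is resolved by two facts you have essentially named: the definition of $R$-boundedness lets the constant operators $M(\xi_k)$ act blockwise on the Rademacher sum at total cost $C_R$, and the absolute-convex-hull lemma (the paper's Lemma~\ref{lemma: abs convex hull}, from \cite{CPSW}) shows that the variation terms form an $R$-bounded family with bound at most $2C_R\log 2$; together these give the claimed operator norm $C_{p,X}C_R$. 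Your reserve route via interval multipliers and Schauder decompositions is likewise legitimate and is essentially the approach of \cite{CPSW}. Two caveats: as written your proposal identifies but does not carry out the transfer step, so it is a correct proof outline rather than a complete proof; and the two-sided randomized Littlewood--Paley equivalence you invoke is a genuine theorem of Bourgain requiring UMD, sufficient here in one dimension but needing extra hypotheses (such as property $(\alpha)$) in multidimensional or less smooth settings.
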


On noting that the resolvent is the Laplace transform of the semigroup, with $M(\xi)=i\xi (i\xi - A)\inv$ it is seen that $\mathcal{M}f = u'$, for the solution $u$ of \eqref{IVP}. By Theorem~\ref{thm:weis-mult}, $R$-boundedness of $\lambda (\lambda - A)\inv$ on the imaginary axis therefore yields maximal $L^p$-regularity of $A$.

In this paper we will use the discrete version of Theorem~\ref{thm:weis-mult}. Here, $\mathcal{F}$ denotes the Fourier transform on $\Z$, which maps a sequence to its Fourier series on the torus $\T=\R/{2\pi\Z}$: for appropriate $f=(f_n)_{n\in\Z}$,
$$
\mathcal{F}f(\theta) =  \sum_{n\in\Z} e^{i\theta n} f_n, \qquad \theta\in \T.
$$

\begin{theorem}[Blunck \cite{Blunck01}, Theorem 1.3]
\label{thm:Blunck}
Let $X$ be a UMD space. Let $\widetilde M: (-\pi,0)\cup(0,\pi) \to B(X)$ be a differentiable function such that the set
\begin{equation}\label{M-tilde-set}
\big\{\widetilde M(\theta)\,:\,\theta \in (-\pi,0)\cup(0,\pi)\big\} \ \cup\ 
\big\{(1-e^{i\theta})(1+e^{i\theta}) \widetilde M'(\theta)\,:\, \theta \in (-\pi,0)\cup(0,\pi)\big\}
\end{equation}
is $R$-bounded, with $R$-bound $C_R$.
Then, $\mathcal{M} f = \mathcal{F}\inv\bigl( \widetilde M(\cdot) (\mathcal{F}f)(\cdot)\bigr)$ extends to a bounded operator
$$
\mathcal{M} :
\ell^p(\Z,X) \to \ell^p(\Z,X) \quad\hbox{ for }\ 1 < p < \infty.
$$
Moreover, there exists a constant $C_{p,X}$ independent of $\widetilde M$ such that the operator norm of
$\mathcal{M} $ is bounded by $C_{p,X}C_R$.
\end{theorem}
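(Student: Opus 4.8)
The plan is to deduce the discrete statement from the continuous operator-valued Mikhlin theorem (Theorem~\ref{thm:weis-mult}) by a transference argument, the bridge between the two settings being the bilinear (Cayley) change of variables between the circle and the line. Note first that $\mathcal M$ is the convolution operator on $\ell^p(\Z,X)$ whose kernel is the sequence of Fourier coefficients $K_j=\frac{1}{2\pi}\int_{-\pi}^{\pi}e^{-ij\theta}\widetilde M(\theta)\,\d\theta\in B(X)$ of the symbol $\widetilde M$; its operator norm on $\ell^p(\Z,X)$ is exactly what we must bound by $C_{p,X}C_R$.

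First I would recast the hypothesis \eqref{M-tilde-set} as a genuine Mikhlin condition on $\R$. Let $\xi=\tan(\theta/2)$, i.e.\ $\theta=2\arctan\xi$, which is a smooth diffeomorphism of $(-\pi,\pi)$ onto $\R$ satisfying $e^{i\theta}=(1+i\xi)/(1-i\xi)$ and $\d\theta/\d\xi=2/(1+\xi^2)$. Setting $M(\xi):=\widetilde M(2\arctan\xi)$ for $\xi\ne 0$, a short computation gives $\xi M'(\xi)=\frac{2\xi}{1+\xi^2}\,\widetilde M'(\theta)=\sin\theta\,\widetilde M'(\theta)$, and since $(1-e^{i\theta})(1+e^{i\theta})=1-e^{2i\theta}=-2i\,e^{i\theta}\sin\theta$ this is precisely $\xi M'(\xi)=\frac{i}{2}\,e^{-i\theta}\,(1-e^{i\theta})(1+e^{i\theta})\,\widetilde M'(\theta)$. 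The prefactor $\frac{i}{2}e^{-i\theta}$ is a scalar of modulus $\half$, so by the contraction principle the set $\{\xi M'(\xi):\xi\ne 0\}$ is $R$-bounded with bound $\le\half C_R$, while $\{M(\xi)\}=\{\widetilde M(\theta)\}$ is $R$-bounded with bound $C_R$; the two singular points $\theta=0$ and $\theta=\pm\pi$ of $\widetilde M$ are sent to $\xi=0$ and $\xi=\pm\infty$. Thus $M$ meets the hypotheses of Theorem~\ref{thm:weis-mult} with $R$-bound $\le C_R$, and that theorem produces a continuous multiplier operator $T_M$ bounded on $L^p(\R,X)$ with $\|T_M\|\le C_{p,X}C_R$.

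The remaining, and decisive, step is to transfer this bound to $\mathcal M$ on $\ell^p(\Z,X)$, and this is where I expect the real difficulty to lie. The change of variables above is the bilinear correspondence $z=e^{i\theta}=(1+i\xi)/(1-i\xi)$, under which the shift on $\ell^2(\Z)$ is carried to multiplication by the inner function $(1+i\xi)/(1-i\xi)$ on $L^2(\R)$; this correspondence is \emph{unitary only on $L^2$}, so one cannot simply conjugate $T_M$ into $\mathcal M$ and read off an $L^p$ bound. Instead I would obtain the discrete estimate by a de Leeuw--type transference, realizing $\mathcal M$ as a limit of sampled and dilated continuous convolutions and passing the bound from $L^p(\R,X)$ to $\ell^p(\Z,X)$ while tracking the reparametrization; alternatively one may bypass transference and prove the discrete bound directly, by running the randomized Littlewood--Paley argument underlying Theorem~\ref{thm:weis-mult} on the torus, decomposing $\T$ dyadically around its two singular points $0$ and $\pm\pi$ and feeding in the $R$-bounds furnished by the computation above together with the UMD property of $X$. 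Either way the crux is to make the passage from the line to the circle respect both the $R$-boundedness and the UMD randomization, uniformly in $p$; the algebraic reduction of the previous paragraph is the easy part.
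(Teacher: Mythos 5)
This statement is not proved in the paper at all: it is quoted verbatim as Theorem~1.3 of Blunck \cite{Blunck01} and used as a black box, so there is no ``paper's own proof'' to compare against. Judged on its own merits, your proposal has a correct and genuinely illuminating first half --- the computation showing that under the Cayley substitution $\xi=\tan(\theta/2)$, $e^{i\theta}=(1+i\xi)/(1-i\xi)$, one has $\xi M'(\xi)=\sin\theta\,\widetilde M'(\theta)=\tfrac{i}{2}e^{-i\theta}(1-e^{i\theta})(1+e^{i\theta})\widetilde M'(\theta)$ is right and explains exactly why the discrete Mikhlin condition \eqref{M-tilde-set} carries the factor $(1-e^{i\theta})(1+e^{i\theta})$, with singular points at $\theta=0,\pm\pi$ corresponding to $\xi=0,\infty$. (Minor quibble: the contraction principle for complex scalars of modulus $\le 1$ costs a factor $2$, so the $R$-bound of $\{\xi M'(\xi)\}$ is $\le C_R$, not $\le\tfrac12 C_R$; this is harmless.)

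The gap is that the second half --- the only genuinely hard part --- is not a proof but a declaration of intent, and the bridge you propose does not actually connect the two shores. The de~Leeuw-type transference results relate an $L^p(\R,X)$ multiplier with symbol $m$ to an $\ell^p(\Z,X)$ multiplier whose symbol is the \emph{restriction/periodization} of $m$ to $(-\pi,\pi]$, i.e.\ the frequency variables are identified additively; your continuous symbol $M(\xi)=\widetilde M(2\arctan\xi)$ is instead related to $\widetilde M$ by the nonlinear Cayley reparametrization, and, as you yourself observe, conjugation by the Cayley transform is unitary only on $L^2$. So the operator $T_M$ you obtain from Theorem~\ref{thm:weis-mult} is not the one that transference would hand to $\mathcal M$, and no argument is given for why its $L^p$ bound should pass to $\ell^p(\Z,X)$. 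Your fallback --- rerunning the randomized Littlewood--Paley/Marcinkiewicz argument directly on the torus with a dyadic decomposition around $\theta=0$ and $\theta=\pm\pi$ --- is essentially what Blunck actually does, but that is the entire content of his theorem, not a corollary of the continuous case; deferring it leaves the proposal without a proof of the statement. Note also that if one only wants to \emph{use} the theorem, the correct move (and the one the paper makes) is simply to cite \cite{Blunck01}.
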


We will encounter the situation where the generating function of a sequence $\{M_n\}_{n\ge 0}$ of operators on $X$ converges on the complex unit disk:
\begin{equation*}
M(\zeta) = \sum_{n=0}^\infty M_n \zeta^n, \qquad |\zeta|<1,
\end{equation*}
and the radial limits
\begin{equation}\label{M-tilde}
\widetilde M(\theta) = \lim_{r\nearrow 1} M(re^{i\theta})
\end{equation}
exist for $\theta\ne 0,\pi$ and satisfy the conditions  of Theorem~\ref{thm:Blunck}.
For a sequence $f=(f_n)_{n\ge 0}\in\ell^p(X):= \ell^p(\N,X)$, extended to negative subscripts $n$ by $0$, the operator $\mathcal{M}$ is then given by the discrete convolution
$$
(\mathcal{M}f)_n = \sum_{j=0}^n M_{n-j} f_j, \qquad n=0,1,2,\ldots
$$

\subsection{Enlarging $R$-bounded sets of operators}

By the definition of $R$-bounded\-ness, it is clear that
if $\{M_1(\lambda):\lambda\in \Lambda\}$ and
$\{M_2(\lambda):\lambda\in \Lambda\}$ are two $R$-bounded
collections of operators on $X$, then 
$\{M_1(\lambda)+M_2(\lambda):\lambda\in \Lambda\}$ and 
$\{M_1(\lambda)M_2(\lambda):\lambda\in \Lambda\}$
are also $R$-bounded. Moreover, the union of two $R$-bounded collections is
$R$-bounded, and  the closure of an $R$-bounded collection of operators 
in the strong topology of $B(X)$ is again $R$-bounded.

The following lemma is often used to
prove the $R$-boundedness of
a collection of operators.

\begin{lemma}[\!\!\cite{CPSW}, Lemma 3.2]
Let $\mathcal{T}$ be an $R$-bounded set of linear operators on $X$, with $R$-bound $C_R$. Then the absolute convex hull of $\mathcal{T}$, that is, the collection of all finite linear combinations of operators in $\mathcal{T}$ with complex coefficients whose absolute sum is bounded by $1$, is also $R$-bounded, with $R$-bound at most $2C_R$.
\end{lemma}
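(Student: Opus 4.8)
The plan is to strip away the convex-combination structure by an averaging argument that reduces every estimate to operators of $\mathcal{T}$ itself, and to treat complex coefficients by splitting into real and imaginary parts; the latter is what produces the factor $2$. First I would reduce to real coefficients. A typical element of the absolute convex hull is $N=\sum_k c_k T_k$ with $T_k\in\mathcal{T}$ and $\sum_k|c_k|\le 1$; writing $c_k=a_k+\iu b_k$ gives $N=A+\iu B$ with $A=\sum_k a_kT_k$, $B=\sum_k b_kT_k$, where $\sum_k|a_k|\le 1$ and $\sum_k|b_k|\le 1$. Thus $A$ and $B$ lie in the \emph{real} absolute convex hull of $\mathcal{T}$. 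Granting that this real hull has $R$-bound $C_R$ (proved below), the collection of all $\iu B$ has the same $R$-bound $C_R$, since multiplication by $\iu$ leaves the norms in the definition of $R$-boundedness unchanged; the additivity of $R$-bounds recalled above then gives the collection of all $N=A+\iu B$ an $R$-bound of at most $2C_R$.

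It remains to show that the real absolute convex hull has $R$-bound $C_R$. Fix finitely many real combinations $N_j=\sum_k c_{jk}T_{jk}$ with $T_{jk}\in\mathcal{T}$ and $p_j:=\sum_k|c_{jk}|\le 1$, together with vectors $v_j\in X$; terms with $p_j=0$ are trivial, so assume $p_j>0$. Writing $c_{jk}=\sigma_{jk}|c_{jk}|$ with $\sigma_{jk}\in\{-1,1\}$, the weights $|c_{jk}|/p_j$ form a probability distribution in $k$ for each $j$. Drawing indices $k_j$ independently from these distributions and using linearity of the operators,
\[
\sum_j r_j(s)\,N_j v_j = \mathbb{E}\Big[\,\sum_j r_j(s)\,T_{jk_j}\big(p_j\sigma_{jk_j}v_j\big)\Big],
\]
where $\mathbb{E}$ denotes the average over the $k_j$. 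Applying Jensen's inequality to $\|\cdot\|_X^2$, integrating over $s\in[0,1]$, and invoking the $R$-boundedness of $\mathcal{T}$ for each fixed realization $(k_j)$, for which all $T_{jk_j}\in\mathcal{T}$, I obtain
\[
\int_0^1\Big\|\sum_j r_j(s)N_jv_j\Big\|_X^2\,\d s \le C_R^2\,\mathbb{E}\int_0^1\Big\|\sum_j r_j(s)\,p_j\sigma_{jk_j}v_j\Big\|_X^2\,\d s .
\]
By the symmetry of the Rademacher functions the fixed signs $\sigma_{jk_j}$ may be dropped, and the $L^2$ contraction principle removes the factors $p_j\in[0,1]$; the right-hand side is then bounded by $C_R^2\int_0^1\|\sum_j r_j(s)v_j\|_X^2\,\d s$, which is the claim.

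The main obstacle is precisely the coupling between the Rademacher index $j$ and the convex-combination index $k$: the coefficients $c_{jk}$ sit \emph{inside} the Rademacher sum, so the $R$-boundedness of $\mathcal{T}$ cannot be applied term by term. The averaging identity above, which is just Fubini combined with the convexity of the norm, is what decouples the two indices; after that, only two elementary properties of Rademacher sequences remain to be used, namely their invariance under fixed sign changes and the $L^2$ contraction principle for scalar multipliers of modulus at most $1$.
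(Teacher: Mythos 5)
The paper does not prove this lemma; it is quoted directly from \cite{CPSW}, so there is no in-paper argument to compare against. Your proof is correct and self-contained, and it isolates exactly the two ingredients needed: an averaging argument showing that absolutely convex combinations with \emph{real} coefficients cost nothing in the $R$-bound, and a device for passing to complex coefficients at the cost of a factor $2$. Your probabilistic decoupling (drawing the inner index $k_j$ from the distribution $|c_{jk}|/p_j$ and applying Jensen's inequality, then the sign-invariance of the Rademacher system and the real-scalar contraction principle, which holds with constant $1$) is a clean way to carry out the first step. The standard proof behind the citation runs slightly differently: it notes that $(t_1,\dots,t_l)\mapsto\bigl(\int_0^1\|\sum_j r_j(s)\,t_j T_j v_j\|_X^2\,\d s\bigr)^{1/2}$ is convex, hence maximized over the cube at its extreme points, which handles real convex combinations, and then obtains the factor $2$ from the contraction principle for \emph{complex} unimodular scalars; your factor $2$ instead comes from splitting $c_k=a_k+\iu b_k$ and adding the $R$-bounds of the two real hulls. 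Both routes yield $2C_R$. Two minor points you should make explicit if writing this up: the definition of $R$-boundedness must be read as permitting repetitions among the selected operators (a fixed realization $(k_j)$ may pick the same element of $\mathcal{T}$ for different $j$), and the ``additivity of $R$-bounds'' you invoke at the end is the triangle inequality in $L^2([0,1];X)$, which indeed gives the sum of the two $R$-bounds.
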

%

A simple consequence of this lemma
is the following, which we will use later.

\begin{lemma}
\label{lemma: abs convex hull}
    Let $\{M(z) \ : \ z\in{\Gamma}\} \subset B(X)$
 be a $R$-bounded collection of operators, with $R$-bound $C_R$,
 where $\Gamma$ is a contour in the complex plane.
 Let $f(\lambda,z)$ be a complex-valued function of $z\in \Gamma$
 and $\lambda\in \Lambda$.
If
\begin{equation*}
     \int_\Gamma|f(\lambda,z)|\cdot |\d z|\leq C_0,
\end{equation*}
where $C_0$ is independent of $\lambda\in \Lambda$, then
the collection of operators in the closure of the absolute convex hull
    of $\{M(z)  :  z\in{\Gamma}\} $
    in the strong topology of $B(X)$,
    \begin{equation*}
   \biggl\{   \frac 1{C_0} \int_\Gamma f(\lambda,z)M(z)\,\d z: \, \lambda\in \Lambda \biggr\},
    \end{equation*}
 is $R$-bounded
with $R$-bound at most $2C_R$.
\end{lemma}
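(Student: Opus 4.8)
The plan is to realize each operator $\frac{1}{C_0}\int_\Gamma f(\lambda,z)M(z)\,\d z$ as a limit, in the strong operator topology, of elements of the absolute convex hull of $\{M(z):z\in\Gamma\}$, and then to invoke the lemma of \cite{CPSW} quoted above together with the fact, recorded earlier, that the strong closure of an $R$-bounded set is again $R$-bounded with the same bound. Once this is done the asserted collection is a subset of an $R$-bounded set of $R$-bound at most $2C_R$, hence is itself $R$-bounded with $R$-bound at most $2C_R$.

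For the approximation I would fix $\lambda\in\Lambda$ and $v\in X$. Since an $R$-bounded family is in particular uniformly bounded (take $l=1$ in the definition), there is a constant $C$ with $\|M(z)\|_{B(X)}\le C$ for all $z\in\Gamma$, so the integrand $z\mapsto f(\lambda,z)M(z)v$ is Bochner integrable along $\Gamma$: its norm is dominated by $C\|v\|_X|f(\lambda,z)|$, whose contour integral is at most $C\,C_0\,\|v\|_X$. Thus the Bochner integral defines the vector $\bigl(\frac{1}{C_0}\int_\Gamma f(\lambda,z)M(z)\,\d z\bigr)v$, and it is the strong limit of Riemann sums $\frac{1}{C_0}\sum_j f(\lambda,z_j)M(z_j)v\,\Delta z_j$ taken over finer and finer partitions of $\Gamma$ (and, if $\Gamma$ is unbounded, over exhausting truncations, the tails being controlled by the integrability of $f$ and the uniform bound on $M$).

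Each such Riemann sum is a finite linear combination $\sum_j c_j\,M(z_j)v$ with coefficients $c_j=f(\lambda,z_j)\,\Delta z_j/C_0$, and $\sum_j|c_j|$ converges to $\frac{1}{C_0}\int_\Gamma|f(\lambda,z)|\,|\d z|\le 1$. After multiplying by a factor $(1+\eps)\inv$ with $\eps\to 0$ along the partitions, these combinations satisfy $\sum_j|c_j|\le 1$, so they lie in the absolute convex hull of $\{M(z):z\in\Gamma\}$ while still converging strongly to $\bigl(\frac{1}{C_0}\int_\Gamma f(\lambda,z)M(z)\,\d z\bigr)v$. As $v$ was arbitrary, the operator $\frac{1}{C_0}\int_\Gamma f(\lambda,z)M(z)\,\d z$ lies in the closure of that absolute convex hull in the strong topology of $B(X)$, for every $\lambda\in\Lambda$. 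By the lemma of \cite{CPSW} the absolute convex hull has $R$-bound at most $2C_R$, its strong closure inherits this bound, and the claimed collection is contained in this closure; hence it is $R$-bounded with $R$-bound at most $2C_R$.

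The only genuinely technical point I anticipate is the strong-topology convergence of the rescaled Riemann sums to the operator-valued integral, together with the bookkeeping needed to force the coefficient sum to be exactly $\le 1$ and to dispose of the tails when $\Gamma$ is unbounded. All of these are routine once the uniform boundedness furnished by $R$-boundedness and the hypothesis $\int_\Gamma|f(\lambda,z)|\,|\d z|\le C_0$ are in hand; no smoothness or growth assumptions on $M$ beyond $R$-boundedness are required.
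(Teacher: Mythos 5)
Your argument is correct and is exactly the route the paper intends: the paper states this lemma as ``a simple consequence'' of the quoted result of \cite{CPSW} together with the fact that the strong closure of an $R$-bounded set is $R$-bounded with the same bound, and your Riemann-sum approximation with coefficient sum at most $1$ supplies precisely the missing details. No gaps.
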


\section{Implicit Euler and Crank--Nicolson method}
\label{section: IE and CN}
We first present basic ideas to prove discrete maximal parabolic regularity on two simple methods. Later these ideas will be carried over to higher-order BDF and Runge--Kutta methods, where the key properties remain $R$-boundedness and A- or A($\alpha$)-stability.

We consider the backward Euler and Crank--Nicolson method applied with stepsize $\tau>0$,
\begin{align}\label{Euler}
  \frac{u_n - u_{n-1}}{\tau} =&\ Au_n + f_n,  \qquad  & & n\geq1,  &u_0=0,
\intertext{and}
\label{C-N}
    \frac{u_n - u_{n-1}}{\tau} =&\ A \frac{u_n + u_{n-1}}{2} + \frac{f_n+f_{n-1}}2\,,  \qquad & & n\geq1,
    &u_0=0 .
\end{align}
In this section, we use the following
notation for the backward difference:
$$
    \dot u_n = \frac{u_n - u_{n-1}}{\tau}.
$$


\begin{theorem}
\label{theorem: IE}
 If 
    $A$ has maximal $L^p$-regularity, for $1<p<\infty$,
then the numerical solution $(u_n)_{n=1}^N$ of \eqref{Euler}, obtained by the backward Euler method with stepsize $\tau$, satisfies the discrete maximal regularity estimate
    \begin{equation*}
        \big\|(\dot u_n )_{n=1}^N\big\|_{\ell^p(X)} + \big\|(A u_n )_{n=1}^N\big\|_{\ell^p(X)} \leq C_{p,X} \big\|(f_n)_{n=1}^N\big\|_{\ell^p(X)},
    \end{equation*}
    where the constant is independent of $N$ and $\tau$.
\end{theorem}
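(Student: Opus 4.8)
The plan is to write the numerical solution as a discrete convolution and then apply Blunck's multiplier theorem (Theorem~\ref{thm:Blunck}), drawing the required $R$-boundedness from Weis' characterization (Theorem~\ref{MaxR}). Solving the recurrence \eqref{Euler} gives $(I-\tau A)u_n = u_{n-1}+\tau f_n$, so with $u_0=0$ one finds $u_n = \tau\sum_{j=1}^n (I-\tau A)^{-(n-j+1)}f_j$. Hence, extending $(f_n)$ by zero to all of $\Z$, the sequence $(Au_n)$ is the discrete convolution of $(f_n)$ with the operator kernel $M_k = \tau A(I-\tau A)^{-(k+1)}$ for $k\ge 0$ (and $M_k=0$ for $k<0$). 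First I would compute the generating function $M(\zeta)=\sum_{k\ge0}M_k\zeta^k$. Summing the geometric series and substituting $\lambda = (1-\zeta)/\tau$ yields
\begin{equation*}
M(\zeta) = \tau A\bigl((1-\zeta)I-\tau A\bigr)\inv = A(\lambda - A)\inv = \lambda(\lambda-A)\inv - I .
\end{equation*}

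The decisive point --- and the place where A-stability enters --- is the geometry of the map $\zeta\mapsto\lambda=(1-\zeta)/\tau$. As $\zeta$ ranges over the closed unit disk, $1-\zeta$ ranges over the closed disk of radius $1$ about $1$, which lies in the closed right half-plane; writing $1-e^{i\theta} = 2\sin(\theta/2)\,e^{i(\theta/2-\pi/2)}$ shows that for $\theta\in(-\pi,0)\cup(0,\pi)$ the boundary values $\lambda=(1-e^{i\theta})/\tau$ lie in the open sector $\Sigma_{\pi/2}$. Since maximal $L^p$-regularity of $A$ supplies, by Theorem~\ref{MaxR}, an $R$-bound for $\{\lambda(\lambda-A)\inv:\lambda\in\Sigma_\vartheta\}$ with some $\vartheta>\pi/2$, and $\Sigma_{\pi/2}\subset\Sigma_\vartheta$, the radial limits $\widetilde M(\theta)=\lambda(\lambda-A)\inv-I$ form an $R$-bounded set --- a subfamily of an $R$-bounded family, shifted by the identity. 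This verifies the first condition in Blunck's theorem.

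For the second condition I would differentiate. Using $\tfrac{d}{d\theta}=i\zeta\tfrac{d}{d\zeta}$ and $d\lambda/d\zeta=-1/\tau$, one gets $\widetilde M'(\theta)=i\zeta\,\tau\inv A(\lambda-A)^{-2}$, and since $1-e^{i\theta}=\tau\lambda$,
\begin{equation*}
(1-e^{i\theta})(1+e^{i\theta})\widetilde M'(\theta) = i\zeta(1+\zeta)\,\lambda A(\lambda-A)^{-2} = i\zeta(1+\zeta)\bigl(G(\lambda)^2 - G(\lambda)\bigr),
\end{equation*}
where $G(\lambda)=\lambda(\lambda-A)\inv$ and I used $\lambda A(\lambda-A)^{-2}=G(\lambda)\,A(\lambda-A)\inv=G(\lambda)\bigl(G(\lambda)-I\bigr)$. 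Since products, sums and differences of $R$-bounded families are again $R$-bounded, $\{G(\lambda)^2-G(\lambda)\}$ is $R$-bounded; multiplying by the scalars $i\zeta(1+\zeta)$, which satisfy $|i\zeta(1+\zeta)|\le 2$ on $|\zeta|=1$, preserves $R$-boundedness (Lemma~\ref{lemma: abs convex hull}). Here the weight $1-e^{i\theta}=\tau\lambda$ built into Blunck's condition is exactly what cancels the extra resolvent power and turns the double pole $A(\lambda-A)^{-2}$ into a product of copies of the $R$-bounded family $G(\lambda)$.

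With both conditions verified, Theorem~\ref{thm:Blunck} shows that the convolution operator $(f_n)\mapsto(Au_n)$ is bounded on $\ell^p(\Z,X)$, with a bound depending only on $p$, $X$ and the $R$-bound of the resolvent of $A$ --- in particular independent of $\tau$ and $N$. Restricting to finitely supported sequences gives $\|(Au_n)_{n=1}^N\|_{\ell^p(X)}\le C_{p,X}\|(f_n)_{n=1}^N\|_{\ell^p(X)}$, and the bound on $(\dot u_n)$ then follows from $\dot u_n=Au_n+f_n$ and the triangle inequality. I expect the main obstacle to be the second Blunck condition: recognizing that A-stability is precisely what places the symbol $\lambda=(1-\zeta)/\tau$ in the sector where Weis' $R$-boundedness is available, and then exploiting the weight $(1-e^{i\theta})(1+e^{i\theta})$ to rewrite the differentiated multiplier as an $R$-bounded combination of the resolvent family.
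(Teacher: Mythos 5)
Your proposal is correct and follows essentially the same route as the paper: identify the multiplier $M(\zeta)=\lambda(\lambda-A)^{-1}$ (up to the harmless shift by $I$, since you target $Au_n$ rather than $\dot u_n$) with $\lambda=(1-\zeta)/\tau$, observe that $\lambda$ stays in the right half-plane so that Weis' characterization (Theorem~\ref{MaxR}) gives the $R$-bound, and use the algebraic identity expressing the weighted derivative as $M^2-M$ times a bounded scalar to verify Blunck's second condition. The only cosmetic differences are that you work with radial boundary values and carry the extra factor $1+e^{i\theta}$, whereas the paper verifies $R$-boundedness on the whole closed disk and uses only the factor $1-\zeta$.
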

\begin{proof} 
We use the generating functions $u(\zeta)=\sum_{n=0}^\infty u_n \zeta^n$ and $f(\zeta)=\sum_{n=0}^\infty f_n \zeta^n$. Since the initial value is zero, we obtain
\begin{equation*}
   \Bigl (\frac{1-\zeta}\tau -A\Bigr) u(\zeta) = f(\zeta)
\end{equation*}
 and hence $\dot u(\zeta)= \sum_{n=0}^\infty \dot u_n \zeta^n$ is given by
\begin{equation*}
  \dot u(\zeta)=  \frac{1-\zeta}{\tau} u(\zeta) =  M(\zeta)f(\zeta)
  \quad\hbox{ with } \quad
  M(\zeta)=
  \frac{1-\zeta}{\tau}\Bigl( \frac{1-\zeta}{\tau}-A\Bigr)\inv .
\end{equation*}

In view of Theorem~\ref{thm:Blunck},
we only have to show analyticity of $M(\zeta)$ in the open unit disk $|\zeta|<1$ and the $R$-boundedness of the
 set \eqref{M-tilde-set}, with $\widetilde M(\theta)=M(e^{i\theta})$ for $\theta\ne 0$ modulo $2\pi$. To this end we show that the set
 \begin{equation}\label{M-Euler-R-bounded}
 \big\{ M(\zeta)\,:\, |\zeta|\le 1, \zeta\ne 1\big\} \cup \big\{(1-\zeta)M'(\zeta)\,:\,|\zeta|\le 1, \zeta\ne 1\big\}
 \quad\hbox{is $R$-bounded,}
 \end{equation}
 with an $R$-bound independent of $\tau$.
 Since $\Re(1-\zeta)\ge 0$ for $|\zeta|\le 1$, with strict inequality for $\zeta\ne 1$, we have that
$$
\{ M(\zeta)\,:\, |\zeta|\le 1, \zeta\ne 1 \} \subset \{ \lambda (\lambda - A)\inv \, : \,    \Re \lambda > 0 \},
$$
where the latter set is $R$-bounded by 
the ``only if" direction of Theorem~\ref{MaxR}.
Since
$$
(1-\zeta)M'(\zeta) = -M(\zeta) + M(\zeta)^2,
$$
we obtain \eqref{M-Euler-R-bounded}, with an $R$-bound independent of the stepsize $\tau$. The stated result therefore follows from Theorem~\ref{thm:Blunck}.
 \end{proof}

\begin{theorem}
\label{theorem: CN}
If 
    $A$ has maximal $L^p$-regularity, for $1<p<\infty$,
    then the numerical solution  $(u_n)_{n=1}^N$ of \eqref{C-N}, obtained by the Crank--Nicolson method with stepsize $\tau$, is bounded by    \begin{equation*}
        \big\|(\dot u_n )_{n=1}^N\big\|_{\ell^p(X)} + \big\|(A u_n )_{n=1}^N\big\|_{\ell^p(X)} \leq C_{p,X} \big\|(f_n)_{n=0}^N\big\|_{\ell^p(X)},
    \end{equation*}
    where the constant is independent of $N$ and $\tau$.
\end{theorem}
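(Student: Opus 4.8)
The plan is to follow the same generating-function strategy used for the backward Euler method in Theorem~\ref{theorem: IE}, reducing everything to a single application of Blunck's multiplier theorem (Theorem~\ref{thm:Blunck}). First I would introduce the generating functions $u(\zeta)=\sum_{n\ge 0}u_n\zeta^n$ and $f(\zeta)=\sum_{n\ge 0}f_n\zeta^n$. Summing the Crank--Nicolson recurrence \eqref{C-N} against $\zeta^n$, and using $u_0=0$, the left side produces the factor $(1-\zeta)/\tau$ while the right side produces the averaging factor $(1+\zeta)/2$ on both $Au(\zeta)$ and on $f(\zeta)$. This yields the relation
\begin{equation*}
\Bigl(\frac{1-\zeta}{\tau} - \frac{1+\zeta}{2}\,A\Bigr)u(\zeta) = \frac{1+\zeta}{2}\,f(\zeta),
\end{equation*}
from which $\dot u(\zeta)=\frac{1-\zeta}{\tau}u(\zeta)=M(\zeta)f(\zeta)$ with the multiplier
\begin{equation*}
M(\zeta) = \frac{1-\zeta}{\tau}\cdot\frac{1+\zeta}{2}\Bigl(\frac{1-\zeta}{\tau} - \frac{1+\zeta}{2}\,A\Bigr)^{-1}.
\end{equation*}
The key algebraic rearrangement is to write $\frac{1-\zeta}{\tau}-\frac{1+\zeta}{2}A = \frac{1+\zeta}{2}(\lambda(\zeta)-A)$ where $\lambda(\zeta)=\frac{2}{\tau}\frac{1-\zeta}{1+\zeta}$ is the Crank--Nicolson stability symbol, so that $M(\zeta)=\lambda(\zeta)(\lambda(\zeta)-A)^{-1}$.

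The central observation, playing the role that $\Re(1-\zeta)\ge 0$ did for backward Euler, is the A-stability of Crank--Nicolson expressed through the symbol: for $|\zeta|\le 1$ with $\zeta\ne\pm 1$, the Möbius map $\zeta\mapsto\frac{1-\zeta}{1+\zeta}$ sends the closed unit disk into the closed right half-plane, so $\Re\lambda(\zeta)\ge 0$, with strict inequality for $\zeta\ne\pm 1$. Hence I would show
\begin{equation*}
\{M(\zeta)\,:\,|\zeta|\le 1,\ \zeta\ne\pm 1\} \subset \{\lambda(\lambda-A)^{-1}\,:\,\Re\lambda>0\},
\end{equation*}
the right-hand set being $R$-bounded by the ``only if'' direction of Theorem~\ref{MaxR}. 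For the derivative condition in \eqref{M-tilde-set}, I would differentiate $M(\zeta)=\lambda(\zeta)(\lambda(\zeta)-A)^{-1}$ and use $\frac{d}{d\lambda}\bigl[\lambda(\lambda-A)^{-1}\bigr]=-(\lambda-A)^{-1}\cdot A(\lambda-A)^{-1}\cdot\lambda^{-1}\cdots$; more cleanly, since $(\lambda-A)^{-1}$ has derivative $-(\lambda-A)^{-2}$ in $\lambda$, one gets $M'(\zeta)=\lambda'(\zeta)\bigl[(\lambda-A)^{-1}-\lambda(\lambda-A)^{-2}\bigr]= -\lambda'(\zeta)\,\lambda^{-1}M(\zeta)^2$ after the same $-M+M^2$ type manipulation used for Euler. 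The point is that $M'(\zeta)$ is a rational expression in $M(\zeta)$ with scalar coefficients, so $R$-boundedness of $\{M(\zeta)\}$ transfers to $\{M(\zeta)^2\}$ by the product rule for $R$-bounded families noted after Lemma~\ref{MaxR}'s surrounding discussion.

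The one genuinely new complication relative to backward Euler is the double singularity: the factor $(1-e^{i\theta})(1+e^{i\theta})$ appearing in \eqref{M-tilde-set} now has to absorb singular behaviour of $M'$ at \emph{both} $\theta=0$ (i.e. $\zeta=1$) and $\theta=\pi$ (i.e. $\zeta=-1$), which is exactly why Blunck's symbol carries both factors rather than just $(1-e^{i\theta})$. I expect this to be the main obstacle: I would verify that the scalar weight $(1-\zeta)(1+\zeta)\lambda'(\zeta)\lambda(\zeta)^{-1}$ remains bounded as $\zeta\to\pm 1$. A direct computation gives $\lambda'(\zeta)=\frac{2}{\tau}\cdot\frac{-2}{(1+\zeta)^2}$ and $\lambda(\zeta)^{-1}=\frac{\tau}{2}\frac{1+\zeta}{1-\zeta}$, so $(1-\zeta)(1+\zeta)\lambda'(\zeta)\lambda(\zeta)^{-1}$ simplifies to a bounded scalar (in fact a constant multiple of $-2$), confirming that $\{(1-e^{i\theta})(1+e^{i\theta})\widetilde M'(\theta)\}$ is a scalar multiple of $\{M(\zeta)^2\}$ and therefore $R$-bounded uniformly in $\tau$. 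Analyticity of $M(\zeta)$ in the open disk is immediate since $\lambda(\zeta)\in\Sigma_\vartheta$ with $\vartheta>\pi/2$ there, so $(\lambda(\zeta)-A)^{-1}$ stays in the resolvent set. With both parts of \eqref{M-tilde-set} $R$-bounded uniformly in $\tau$, Theorem~\ref{thm:Blunck} gives the bound on $(\dot u_n)$, and the bound on $(Au_n)$ follows either by the identical argument applied to the multiplier $A(\lambda(\zeta)-A)^{-1}=\lambda(\zeta)(\lambda(\zeta)-A)^{-1}-I$ or from $Au_n=\dot u_n - \frac{f_n+f_{n-1}}{2}$ rearranged via \eqref{C-N}; note the right-hand norm is taken over $n=0,\dots,N$ because the scheme uses $f_{n-1}$ as well as $f_n$.
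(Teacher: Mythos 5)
Your proposal follows essentially the same route as the paper: generating functions, the rearrangement via the symbol $\lambda(\zeta)=\frac{2}{\tau}\frac{1-\zeta}{1+\zeta}$ (the paper's $\delta(\zeta)/\tau$), the inclusion of $M(\zeta)$ into the $R$-bounded resolvent family from Theorem~\ref{MaxR}, the identity $(1-\zeta)(1+\zeta)M'(\zeta)=-2M(\zeta)+2M(\zeta)^2$, and Blunck's theorem. Two small corrections are needed. First, your $M(\zeta)$ as defined carries an extra scalar factor $\frac{1+\zeta}{2}$ and does not simplify exactly to $\lambda(\zeta)(\lambda(\zeta)-A)^{-1}$; the paper keeps this harmless bounded factor explicit in $\dot u(\zeta)=\frac{1+\zeta}{2}M(\zeta)f(\zeta)$. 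Second, the claim that $\Re\lambda(\zeta)>0$ strictly for $\zeta\ne\pm1$ is false precisely where it matters: on the unit circle $|\zeta|=1$ (the set on which $\widetilde M$ is evaluated in Theorem~\ref{thm:Blunck}) the M\"obius map sends $\zeta$ to the imaginary axis, so $\Re\lambda(\zeta)=0$ there; the correct inclusion is into $\{\lambda(\lambda-A)^{-1}:\Re\lambda\ge0,\ \lambda\ne0\}$, which is still $R$-bounded since $\Sigma_\vartheta$ with $\vartheta>\pi/2$ contains the punctured closed right half-plane, and this is exactly how the paper states it.
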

\begin{proof} 
We only have to slightly modify the previous proof. In contrast to before, now the factor $1+e^{i\theta}$ in condition \eqref{M-tilde-set} becomes important.
Using the generating functions we obtain
\begin{equation*}
    \Big(\frac{1-\zeta}{\tau}-A\frac{1+\zeta}{2}\Big) u(\zeta) = \frac{1+\zeta}2 \,f(\zeta),
\end{equation*}
which can be rewritten as
\begin{equation*}
   \Big(\frac2\tau \frac{1-\zeta}{1+\zeta}-A\Big) u(\zeta) = f(\zeta) .
\end{equation*}
Introducing $\delta(\zeta)=2 {(1-\zeta)}/{(1+\zeta)}$, we arrive at
$$
    \dot u(\zeta)= \frac{1-\zeta}{\tau}u(\zeta) = \frac{1+\zeta}2 M(\zeta)f(\zeta)
    \quad\hbox{with}\quad  M(\zeta)=   \frac{\delta(\zeta)}\tau \biggl(\frac{\delta(\zeta)}\tau-A\biggr)\inv ,
$$
and
$$
Au(\zeta) = \bigl(M(\zeta)-1\bigr) f(\zeta).
$$
To apply Theorem~\ref{thm:Blunck}, it suffices to show that the set
\begin{equation}\label{M-CN-R-bounded}
 \big\{ M(\zeta)\,:\, |\zeta|\le 1, \zeta\ne \pm 1\big\} \cup \big\{(1+\zeta)(1-\zeta)M'(\zeta)\,:\,|\zeta|\le 1, \zeta\ne {\pm} 1\big\}
 \quad\hbox{is $R$-bounded.}
 \end{equation}
For the Crank--Nicolson method,
we  have $\Re \delta(\zeta)\ge 0$ for
$|\zeta|\le 1, \zeta\ne - 1$, and $\delta(\zeta)\ne 0$ for $\zeta\ne 1$, so that
$$
\big\{ M(\zeta)\,:\, |\zeta|\le 1, \zeta\ne \pm 1 \big\} \subset \big\{ \lambda (\lambda - A)\inv \, : \,    \Re \lambda \ge 0, \lambda\ne 0 \big\},
$$
where the latter set is again $R$-bounded by 
Theorem~\ref{MaxR}. Since
$$
  (1-\zeta)(1+\zeta) M '(\zeta) = -2 M(\zeta) + {2}M(\zeta)^2,
$$
we then obtain \eqref{M-CN-R-bounded}, and hence Theorem~\ref{thm:Blunck} yields the stated result.
 \end{proof}
%
%
%

\section{Backward difference formulae}
\label{section: BDF}

We consider general $k$-step backward
difference formulae (BDF) for the discretization of \eqref{IVP}:
\begin{equation}\label{def: BDF}
    \frac{1}{\tau} \sum_{j=0}^k \delta_j u_{n-j}  =
    Au_n+f_n, \qquad n \geq k,
\end{equation}
where the coefficients of the method are given by 
\begin{equation*}
    \delta(\zeta)=\sum_{j=0}^k \delta_j \zeta^j=\sum_{\ell=1}^{k} \frac{1}{\ell}(1-\zeta)^\ell  .
\end{equation*}
The method is known to have order $k$ for $k\leq6$, and to be
A($\alpha$)-stable with angle $\alpha = 90^\circ$, $90^\circ$, $86.03^\circ$, $73.35^\circ$, $51.84^\circ$, $17.84^\circ$ for $k=1,\dots, 6$, respectively;
see \cite[Chapter~V]{HairerWannerII}. A($\alpha$)-stability is equivalent to $|\arg\delta(\zeta)|\le \pi-\alpha$ for $|\zeta|\le 1$. Note that the first and second-order BDF methods are A-stable, that is, $\Re\delta(\zeta)\ge 0$ for $|\zeta|\le 1$.
%

In this section, we use the  notation
\begin{equation*}
    \dot u_n = \frac{1}{\tau} \sum_{j=0}^k \delta_j u_{n-j}
\end{equation*}
for the approximation to the time derivative. We consider the method with zero starting values,
\begin{equation}\label{starting-from-zero}
u_0=\ldots=u_{k-1}=0.
\end{equation}
Like for the continuous problem, the effect of non-zero starting or initial values needs to be studied separately, but this is not related to the notion of maximal $L^p$- or $\ell^p$-regularity. We will discuss the case of an initial value $u_0=0$ and possibly non-zero starting values $u_1,\ldots,u_{k-1}$ in Remark~\ref{RemarkBDF}.

\subsection{BDF method of order $\bf 2$}
We obtain preservation of maximal $L^p$-regularity also for time discretization by the A-stable second-order BDF method.

\begin{theorem}
If 
    $A$ has maximal $L^p$-regularity, for $1<p<\infty$,
    then the numerical solution
    $(u_n)_{n=k}^N$ of \eqref{def: BDF} with \eqref{starting-from-zero},
    obtained by the two-step BDF method with stepsize $\tau$, is  bounded by
    \begin{equation*}
        \big\|(\dot u_n )_{n=k}^N\big\|_{\ell^p(X)} + \big\|(A u_n )_{n=k}^N\big\|_{\ell^p(X)} \leq C_{p,X}\big\|(f_n)_{n=k}^N\big\|_{\ell^p(X)},
    \end{equation*}
    where the constant is independent of $N$ and $\tau$.
\end{theorem}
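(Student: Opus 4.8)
The plan is to follow the template of the proofs of Theorems~\ref{theorem: IE} and~\ref{theorem: CN}: pass to generating functions, identify the operator-valued Fourier multiplier, and verify the hypotheses of Theorem~\ref{thm:Blunck}. With $u(\zeta)=\sum_n u_n\zeta^n$ and $f(\zeta)=\sum_n f_n\zeta^n$, the recurrence \eqref{def: BDF} with zero starting values \eqref{starting-from-zero} becomes $(\delta(\zeta)/\tau - A)u(\zeta)=f(\zeta)$, so that
\[
\dot u(\zeta)=\frac{\delta(\zeta)}{\tau}u(\zeta)=M(\zeta)f(\zeta),\qquad M(\zeta)=\frac{\delta(\zeta)}{\tau}\Bigl(\frac{\delta(\zeta)}{\tau}-A\Bigr)\inv,
\]
and $Au(\zeta)=(M(\zeta)-I)f(\zeta)$. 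It then suffices to show that $M$ is analytic in the open unit disk and that the set in \eqref{M-tilde-set}, with $\widetilde M(\theta)=M(e^{i\theta})$, is $R$-bounded uniformly in $\tau$; Theorem~\ref{thm:Blunck} applied to $M$ and to $M-I$ yields the two stated bounds.

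For the $R$-bounded part of \eqref{M-tilde-set} I would use A-stability exactly as in the Euler case. Since the second-order BDF method is A-stable we have $\Re\delta(\zeta)\ge 0$ for $|\zeta|\le 1$, and the only zero of $\delta$ in the closed disk is the simple zero at $\zeta=1$ forced by consistency. Hence for $|\zeta|\le 1$, $\zeta\ne 1$, the value $\delta(\zeta)/\tau$ lies in the closed right half-plane minus the origin, which is contained in the sector $\Sigma_\vartheta$ of Theorem~\ref{MaxR}; this gives both analyticity of $M$ in the disk and
\[
\{M(\zeta):|\zeta|\le 1,\ \zeta\ne 1\}\subset\{\lambda(\lambda-A)\inv:\Re\lambda\ge 0,\ \lambda\ne 0\},
\]
an $R$-bounded set by the ``only if'' direction of Theorem~\ref{MaxR}.

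The substantive step is the derivative condition. Differentiating gives $M'(\zeta)=(\delta'(\zeta)/\delta(\zeta))(M(\zeta)-M(\zeta)^2)$ — note that the factor $1/\tau$ cancels, which is what makes the bound uniform in $\tau$ — so that
\[
(1-\zeta)(1+\zeta)M'(\zeta)=g(\zeta)\bigl(M(\zeta)-M(\zeta)^2\bigr),\qquad g(\zeta)=(1-\zeta)(1+\zeta)\frac{\delta'(\zeta)}{\delta(\zeta)}.
\]
Since products and squares of $R$-bounded families are $R$-bounded, $\{M-M^2\}$ is $R$-bounded, and the remaining task is to show that the scalar factor $g$ is bounded on $|\zeta|\le 1$. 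This is the one place where a genuine calculation is needed and where the main obstacle lies: $\delta'/\delta$ has a pole at the zero $\zeta=1$ of $\delta$, and one must check that it is cancelled. Using $\delta(\zeta)=\tfrac12(1-\zeta)(3-\zeta)$ and $\delta'(\zeta)=\zeta-2$, the factor $(1-\zeta)$ cancels and $g(\zeta)=2(1+\zeta)(\zeta-2)/(3-\zeta)$, which is bounded on the closed disk because $3-\zeta$ stays away from $0$ there. Multiplying the $R$-bounded family $\{M-M^2\}$ by the bounded scalar $g$ (via Lemma~\ref{lemma: abs convex hull}, or the contraction principle) keeps it $R$-bounded, so \eqref{M-tilde-set} is $R$-bounded uniformly in $\tau$ and Theorem~\ref{thm:Blunck} applies.
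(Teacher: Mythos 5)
Your proposal is correct and follows essentially the same route as the paper's proof: generating functions, the identification $M(\zeta)=\frac{\delta(\zeta)}{\tau}\bigl(\frac{\delta(\zeta)}{\tau}-A\bigr)\inv$, $R$-boundedness of $\{M(\zeta)\}$ from A-stability via Theorem~\ref{MaxR}, and the identity $M'=(\delta'/\delta)\,M(1-M)$ with the pole of $\delta'/\delta$ at $\zeta=1$ cancelled by the factor $1-\zeta$ (the paper phrases this via $\mu(\zeta)=\delta(\zeta)/(1-\zeta)=\frac12(3-\zeta)$, which is exactly your factorization $\delta(\zeta)=\frac12(1-\zeta)(3-\zeta)$). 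Your explicit computation of the bounded scalar $g(\zeta)=2(1+\zeta)(\zeta-2)/(3-\zeta)$ is a correct, slightly more concrete version of the paper's ``$(1-\zeta)\mu'(\zeta)/\mu(\zeta)$ is bounded since $\mu$ has no zeros in the closed disk.''
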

\begin{proof} 
We consider the generating function of both sides
of \eqref{def: BDF}
and obtain
\begin{equation}\label{u-bdf}
  u(\zeta) = \biggl(\frac{\delta(\zeta)}\tau-A\biggr)\inv f(\zeta)
\end{equation}
so that
\begin{equation*}
   \frac {\delta(\zeta)}\tau u(\zeta) = M(\zeta) f(\zeta)
        \quad\hbox{with}\quad
        M(\zeta)=  \frac{\delta(\zeta)}\tau \biggl(\frac{\delta(\zeta)}\tau-A\biggr)\inv .
\end{equation*}
Since ${\rm Re}\,\delta(\zeta)\geq 0$
for  $|\zeta|\le 1$ (this expresses the A-stability of the method) and $\delta(\zeta)\ne 0$ for $\zeta\ne 1$,
it follows as before from Theorem~\ref{MaxR} that the set
$$
\{M(\zeta): |\zeta|\le 1, \zeta\ne 1\} \ \hbox{
is $R$-bounded. }
$$
We also have that
$$
\{(1-\zeta)M'(\zeta): |\zeta|\le 1, \zeta\ne 1\} \ \hbox{
is $R$-bounded, }
$$
because, with $\mu(\zeta)=\delta(\zeta)/(1-\zeta)= \frac12(3 -\zeta)$,
\begin{align*}
    (1-\zeta) M'(\zeta) =&\ -(1-\zeta)
     \frac{\delta'(\zeta)}\tau A \Bigl(\frac{\delta(\zeta)}\tau-A\Bigr)^{-2}   \\
    =&\ -\frac{1-\zeta}\tau  \bigl(-\mu(\zeta)
    + (1-\zeta) \mu'(\zeta) \bigr) A \Bigl(\frac{\delta(\zeta)}\tau-A\Bigr)^{-2}  \\
    =&\  \bigg( 1-(1-\zeta) \frac{\mu'(\zeta)}{\mu(\zeta)} \bigg)
    \frac{\delta(\zeta)}\tau A \Bigl(\frac{\delta(\zeta)}\tau-A\Bigr)^{-2}
    \\
    =&\  \bigg( 1-(1-\zeta) \frac{\mu'(\zeta)}{\mu(\zeta)} \bigg)
    M(\zeta)\bigl( 1 -M(\zeta) \bigr),
\end{align*}
where $(1-\zeta) \mu'(\zeta)/\mu(\zeta)$
is a bounded scalar function,
since $\mu(\zeta)\neq 0$ for $|\zeta|\le 1$. Therefore, Theorem~\ref{thm:Blunck} yields the result.
 \end{proof}

\begin{remark} The above proof extends in a direct way to yield discrete maximal $\ell^p$-regularity for A-stable linear multistep methods
$$
\sum_{j=0}^k \alpha_j u_{n+j} = \tau \sum_{j=0}^k \beta_j (Au_{n+j}+f_{n+j}), \qquad n\ge 0,
$$
that have the further property that the quotient of the generating polynomials,
$$
\delta(\zeta)= \frac{\alpha_0\zeta^k+\alpha_1\zeta^{k-1}+\ldots + \alpha_k\zeta^0}{\beta_0\zeta^k+\beta_1\zeta^{k-1}+\ldots + \beta_k\zeta^0},
$$
has no poles or zeros in the closed unit disk $|\zeta|\le 1$, with the exception of a zero at~1. We note that here A-stability is equivalent to $\Re\delta(\zeta)\ge 0$ for $|\zeta|\le 1$, and the requirement that $\delta(\zeta)$ has no pole for $|\zeta|\le 1$ is equivalent to stating that $\infty$ is an interior point of the stability region on the Riemann sphere; cf., e.g., \cite{HairerWannerII}. Note, however, that by Dahlquist's order barrier \cite{Dahlquist63}, A-stable linear multistep methods have at most order 2, and the practically used A-stable multistep methods are the second-order BDF method and the Crank--Nicolson method.
\end{remark}

\subsection{Higher order BDF methods}

We obtain maximal regularity for the BDF methods up to order 6 under a  $R$-boundedness condition in a larger sector.

\begin{theorem}\label{BDFk}
Suppose that the set $\{ \lambda(\lambda -A)\inv \,:\, |\arg\lambda|<\vartheta \}$ is $R$-bounded for an angle $\vartheta>\pi-\alpha$, where $\alpha$ is the angle of A($\alpha$)-stability of the $k$-step BDF method, for $3\le k \le 6$.    Then the numerical solution
    $(u_n)_{n=k}^N$ of \eqref{def: BDF} with \eqref{starting-from-zero},
    obtained by the $k$-step BDF method with stepsize $\tau$, is bounded by
    \begin{equation*}
        \big\|(\dot u_n )_{n=k}^N\big\|_{\ell^p(X)} + \big\|(A u_n )_{n=k}^N\big\|_{\ell^p(X)} \leq C_{p,X}\big\|(f_n)_{n=k}^N\big\|_{\ell^p(X)}
    \end{equation*}
 for $1<p<\infty$,   where the constant is independent of $N$ and $\tau$.
\end{theorem}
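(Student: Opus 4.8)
The plan is to follow the second-order BDF proof verbatim, with the right half-plane replaced by the sector $\Sigma_\vartheta=\{\lambda\neq 0:|\arg\lambda|<\vartheta\}$ in which the resolvent is now assumed $R$-bounded. Setting $\lambda(\zeta)=\delta(\zeta)/\tau$ and passing to generating functions in \eqref{def: BDF} under the zero starting values \eqref{starting-from-zero}, I obtain \eqref{u-bdf} again, hence
$$
\dot u(\zeta)=\frac{\delta(\zeta)}{\tau}\,u(\zeta)=M(\zeta)f(\zeta),\qquad M(\zeta)=\lambda(\zeta)\bigl(\lambda(\zeta)-A\bigr)^{-1}.
$$
By Theorem~\ref{thm:Blunck} it then suffices to show that $M$ is analytic in $|\zeta|<1$ and that the set $\{M(\zeta):|\zeta|\le1,\ \zeta\neq1\}\cup\{(1-\zeta)M'(\zeta):|\zeta|\le1,\ \zeta\neq1\}$ is $R$-bounded: the extra factor $(1+\zeta)$ in \eqref{M-tilde-set}, together with the chain-rule factor $ie^{i\theta}$, is a bounded scalar and the only boundary singularity of $M$ is at $\zeta=1$, so $R$-boundedness of $(1-\zeta)M'(\zeta)$ already implies that of $(1-e^{i\theta})(1+e^{i\theta})\widetilde M'(\theta)$.

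For the first set I would use two structural facts about the BDF symbol for $3\le k\le6$: A($\alpha$)-stability, recorded in the text as $|\arg\delta(\zeta)|\le\pi-\alpha$ on $|\zeta|\le1$, and zero-stability, which guarantees that the only zero of $\delta$ in the closed disk is the simple zero at $\zeta=1$. Together these place $\lambda(\zeta)$ in the closed sector of half-angle $\pi-\alpha<\vartheta$ for $\zeta\neq1$ and keep $\lambda(\zeta)\neq0$ for $|\zeta|<1$; hence $\lambda(\zeta)\in\Sigma_\vartheta\subset\rho(A)$, $M$ is analytic in the open disk, and $\{M(\zeta)\}$ is a subfamily of the assumed $R$-bounded set $\{\lambda(\lambda-A)^{-1}:\lambda\in\Sigma_\vartheta\}$.

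For the derivative set I would repeat the algebra of the order-2 proof. Writing $\mu(\zeta)=\delta(\zeta)/(1-\zeta)$, a polynomial of degree $k-1$ with $\mu(1)=1$, and differentiating $M=\lambda(\lambda-A)^{-1}$ to get $M'=-\lambda'A(\lambda-A)^{-2}$, the factor $\lambda=(1-\zeta)\mu/\tau$ can be pulled out to give
$$
(1-\zeta)M'(\zeta)=\Bigl(1-(1-\zeta)\frac{\mu'(\zeta)}{\mu(\zeta)}\Bigr)\,M(\zeta)\bigl(M(\zeta)-1\bigr).
$$
The scalar prefactor is bounded on $|\zeta|\le1$ exactly because $\mu$ is zero-free there, while $M(M-1)=M^2-M$ is $R$-bounded as a sum and product of the $R$-bounded family of the previous step; multiplying an $R$-bounded family by a bounded scalar function preserves $R$-boundedness. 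This gives the second set, and Theorem~\ref{thm:Blunck} yields the estimate.

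The main obstacle is the zero-freeness of $\mu$ on the closed unit disk, equivalently that $\delta$ carries only its simple zero at $\zeta=1$ there. For $k=2$ this was trivial from $\mu(\zeta)=\tfrac12(3-\zeta)$, but for $3\le k\le6$ it is the classical root condition for BDF, which I would quote from \cite{HairerWannerII} rather than reprove; this is precisely the range of $k$ for which BDF is zero-stable. I would also stress the conceptual point that drives the hypothesis: since A($\alpha$)-stability with $\alpha<\pi/2$ forces $\lambda(\zeta)$ into a sector strictly larger than the right half-plane, the plain maximal-regularity characterization of Theorem~\ref{MaxR} no longer suffices, and the enlarged-sector $R$-boundedness with $\vartheta>\pi-\alpha$ is exactly what is needed to contain the range of $\lambda(\zeta)$.
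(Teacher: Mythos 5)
Your proof is correct and follows essentially the same route as the paper: inclusion of $\{\delta(\zeta)/\tau\}$ in the sector $\Sigma_\vartheta$ via A($\alpha$)-stability, followed by the same algebraic identity $(1-\zeta)M'(\zeta)=\bigl(1-(1-\zeta)\mu'(\zeta)/\mu(\zeta)\bigr)M(\zeta)\bigl(M(\zeta)-1\bigr)$ used for the two-step method. You are in fact slightly more explicit than the paper in justifying that $\mu(\zeta)=\delta(\zeta)/(1-\zeta)$ is zero-free on the closed unit disk for $3\le k\le 6$, which the paper leaves implicit in the phrase ``the rest of the proof is the same.''
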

\begin{proof}$\,$
For the A($\alpha$)-stable $k$-step BDF method,
$|\arg\delta(\zeta)|\le \pi-\alpha<\vartheta$ for $|\zeta|\le 1$, $\zeta\ne 1$, and so the set
$$
\Bigl\{\frac{\delta(\zeta)}\tau\Bigl( \frac{\delta(\zeta)}\tau -A\Bigr)^{-1} :\,|\zeta|\le 1, \ \zeta\ne 1\Bigr\}
\! \subset
\big\{ \lambda(\lambda -A)\inv :\, |\arg\lambda|<\vartheta \big\}
$$
 is $R$-bounded, with an $R$-bound independent of $\tau$.
The rest of the proof is the same as for the two-step BDF method.
 \end{proof}

\begin{remark}\label{BDFkR}
%
According to Weis \cite[Lemma~4.c]{Weis1},
for $X=L^q$ and $q\in(1,2]$ the  set
$\{ \lambda(\lambda -A)\inv \,:\, |\arg\lambda|<\vartheta \}$ is $R$-bounded for any angle  $\vartheta<\frac\pi2 +\sigma q/2$,
where $\sigma$ is the angle of the sector where the semigroup is bounded analytic.
If $X=L^q$ and $q\in[2,\infty)$,
then a duality argument shows that
the condition is satisfied with angle $\vartheta<\frac\pi2 +\sigma q'/2$, where $1/q+1/q'=1$.
In particular, if $X=L^q$, $1<q<\infty$,
and $A=\Delta$ (the Dirichlet Laplacian),
then the $R$-boundedness condition is satisfied with any angle $\vartheta<\frac34\pi$, and
so the condition of Theorem \ref{BDFk}
is satisfied for the BDF methods of orders $1\leq k\leq 5$.
\end{remark}

\begin{remark}\label{RemarkBDF}
If $u_0=0$ but $u_1,\dots, u_{k-1}$ may not be zero,
then we define
$\widetilde f_n=f_n$ for $n\geq k$ and
\begin{equation*}
    \widetilde f_n:
    =\frac{1}{\tau} \sum_{j=0}^n \delta_j u_{n-j} - A u_n
    \quad\mbox{for}\,\,\, n=0, \dots , k-1,
\end{equation*}
so that
\begin{equation*}
    \frac{1}{\tau} \sum_{j=0}^k \delta_j u_{n-j} - A u_n
    = \widetilde f_n \quad\mbox{for}\,\,\, n\ge 0 .
\end{equation*}
Then we obtain that
\begin{align*}
     &\big\|(\dot u_n )_{n=1}^N\big\|_{\ell^p(X)}
     + \big\|(A u_n )_{n=1}^N\big\|_{\ell^p(X)} \nn
     \leq C \big\|(\widetilde f_n)_{n=1}^N\big\|_{\ell^p(X)} \nn\\
     &\leq C \big\|(f_n)_{n=k}^N\big\|_{\ell^p(X)}
     +C \bigg(\sum_{i=1}^{k-1}\big\|u_i/\tau\big\|_{X}^p\bigg)^{\frac{1}{p}}
     +C \bigg(\sum_{i=1}^{k-1}\big\|Au_i\big\|_{X}^p\bigg)^{\frac{1}{p}} ,
    \end{align*}
where the constant $C$ does not depend on $N$ and $\tau$.
    In the next section, we shall see that
    if $u_0=0$ and the starting values $u_1,\dots,u_{k-1}$
    are computed by an A-stable Runge--Kutta method with 
    invertible coefficient matrix $\AC$, then we have
\begin{align*}
\bigg(\sum_{i=1}^{k-1}\big\|u_i/\tau\big\|_{X}^p\bigg)^{\frac{1}{p}}
+\bigg(\sum_{i=1}^{k-1}\big\|Au_i\big\|_{X}^p\bigg)^{\frac{1}{p}}
\leq C_{p,X}\big\|(f_i)_{i=1}^{k-1}\big\|_{\ell^p(X)}.
\end{align*}
\end{remark}

\section{A-stable Runge--Kutta methods}
\label{section: RK}
We consider an  implicit Runge--Kutta \linebreak meth\-od with $s$ stages for the time discretization of the evolution equation \eqref{IVP}. We refer to Hairer \& Wanner \cite{HairerWannerII} for the basic notions related to such methods.

\medskip
The coefficients of the method are given by the Butcher tableau
\[
 \begin{array}{c|c}
    c \,& \,\AC \\
     \hline & \,b^T
  \end{array}
 \quad = \quad
   \begin{array}{c|c}
    (c_{i}) & (a_{ij}) \\
     \hline & (b_{j})
  \end{array}
  \qquad \quad (i,j = 1,\dotsc, s).
  \]
  Applied to the evolution equation \eqref{IVP}, a step of the method with stepsize $\tau>0$ reads
\begin{equation}
\label{eq: R-K method}
    \begin{alignedat}{3}
        U_{ni} &= u_{n} + \tau \sum_{j=1}^{s} a_{ij} \dot{U}_{nj}, \qquad &&\text{for} \quad i=1,2,\dotsc,s, \\
        u_{n+1} &= u_{n} + \tau \sum_{i=1}^{s} b_{i} \dot{U}_{ni},&& n\geq 1 \\
        \dot{U}_{ni} &= A U_{ni} + f(t_n + c_i \tau) \qquad &&\text{for} \quad i=1,2,\dotsc,s,
    \end{alignedat}
\end{equation}
Here $u_n\in X$ is the solution approximation a the $n$th time step, $U_{ni}\in X$ are the internal stages, and $\dot{U}_{ni}\in X$ is again not a continuous derivative, but a suggestive notation for the increments.

The {\it stability function} of the Runge--Kutta method is the rational function
\begin{equation*}
    R(z) = 1 + z b^T(I-z\AC)\inv \one,
\end{equation*}
where $\one=(1,\ldots,1)^T\in \R^s$. The stability function is a rational approximation to the exponential function, $R(z)=e^z+O(z^{r+1})$ for $z\to 0$, where $r$ is greater or equal to the order of the Runge--Kutta method, which we always assume to be at least $1$. Note that if $\AC$ is invertible, then $R(\infty) = 1 - b^T \AC \inv \one$.

The Runge--Kutta method is {\it A-stable} if $I-z\AC$ is nonsingular for $\Re z \leq 0$ and the stability function satisfies
$$
|R(z)|\le 1\quad\hbox{ for }\quad \Re z \leq 0.
$$

\begin{example}
Radau IIA methods are an important class of Runge--Kutta methods that are A-stable, have an invertible matrix $\AC$ and have $R(\infty)=0$ for an arbitrary number of stages $s\ge 1$; see \cite[Section~IV.5]{HairerWannerII}. For these methods, $b_j = a_{sj}$, so that $u_{n+1}=U_{ns}$. The $s$-stage method has classical order $2s-1$, that is, the error on a finite time interval is bounded by $O(\tau^{2s-1})$ when the method is applied to smooth ordinary differential equations. For  parabolic problems as considered in this paper, the order of approximation is studied in \cite{LO93} and is typically a non-integer number between $s+1$ and $2s-1$.
Radau IIA methods can be viewed as collocation methods on the Radau quadrature nodes. For linear evolution equations they can  alternatively be viewed as fully discretized discontinuous Galerkin methods with Radau quadrature on the integral terms; see \cite{AMN11}. The $s$-stage method has classical order $2s$. The $1$-stage Radau IIA method is the implicit Euler method.
\end{example}

\begin{example} Gauss methods are a class of Runge--Kutta methods that are A-stable for all stage numbers $s\ge 1$, have an invertible matrix $\AC$ and have $R(\infty)=(-1)^s$; see \cite[Section~IV.5]{HairerWannerII}. The $s$-stage method has classical order $2s$. The $1$-stage Gauss method is the implicit midpoint rule (Crank--Nicolson method).
\end{example}

%

We have the following result on discrete maximal regularity.

\begin{theorem}
\label{theorem: R-K}
{\it
Consider an A-stable Runge--Kutta method with an invertible coefficient matrix $\AC$. If the operator $A$ has maximal $L^p$-regularity, for $1<p<\infty$, then
    the numerical solution  \eqref{eq: R-K method}, obtained by the Runge--Kutta method with stepsize $\tau$, is bounded by    \begin{equation*}
        \sum_{i=1}^s \big\|(\dot U_{ni} )_{n=0}^N\big\|_{\ell^p(X)} + \sum_{i=1}^s \big\|(A U_{ni} )_{n=0}^N\big\|_{\ell^p(X)} \leq C_{p,X} \sum_{i=1}^s \big\|(f(t_n+c_i\tau))_{n=0}^N\big\|_{\ell^p(X)},
    \end{equation*}
    where the constant is independent of $N$ and $\tau$.
    }
\end{theorem}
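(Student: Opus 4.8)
The plan is to follow the template of the one-step and multistep proofs: pass to generating functions, read off an operator-valued symbol, and verify the hypotheses of Blunck's discrete multiplier theorem (Theorem~\ref{thm:Blunck}) on the UMD space $X^s=\C^s\otimes X$. First I would vectorize \eqref{eq: R-K method} over the stages, writing $\mathbf U_n=(U_{ni})_i$, $\dot{\mathbf U}_n=(\dot U_{ni})_i$ and $\mathbf F_n=(f(t_n+c_i\tau))_i$, so that the stage relation reads $\mathbf U_n=u_n\one+\tau(\AC\otimes I)\dot{\mathbf U}_n$, the increment relation $\dot{\mathbf U}_n=(I\otimes A)\mathbf U_n+\mathbf F_n$, and the update $u_{n+1}=u_n+\tau(b^T\otimes I)\dot{\mathbf U}_n$. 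With zero initial value and the sequences extended by $0$ to $\Z$, I would eliminate $u(\zeta)$ and $\dot{\mathbf U}(\zeta)$ from the three generating-function identities, using $\tau\dot{\mathbf U}(\zeta)=(\AC\inv\otimes I)(\mathbf U(\zeta)-u(\zeta)\one)$ and $b^T\AC\inv\one=1-R(\infty)$. This leads to $\bigl(\tfrac1\tau\mathbf\Delta(\zeta)\otimes I-I\otimes A\bigr)\mathbf U(\zeta)=\mathbf F(\zeta)$ with the matrix differentiation symbol
$$
\mathbf\Delta(\zeta)=\AC\inv-\frac{\zeta}{1-\zeta R(\infty)}\,\AC\inv\one\, b^T\AC\inv ,
$$
so that $\dot{\mathbf U}(\zeta)=\mathbf M(\zeta)\mathbf F(\zeta)$ and $A\mathbf U(\zeta)=(\mathbf M(\zeta)-I)\mathbf F(\zeta)$, where $\mathbf M(\zeta)=\tfrac1\tau\mathbf\Delta(\zeta)\bigl(\tfrac1\tau\mathbf\Delta(\zeta)-A\bigr)\inv$ (the tensor identities suppressed). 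The theorem then reduces to analyticity of $\mathbf M$ in $|\zeta|<1$ and $R$-boundedness of the set $\{\mathbf M(\zeta)\}\cup\{(1-\zeta)(1+\zeta)\mathbf M'(\zeta)\}$; since $s$ is finite, $R$-boundedness on $X^s$ may be checked entrywise on $X$.

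The spectral core is the location of the eigenvalues of $\mathbf\Delta(\zeta)$. As $\mathbf\Delta(\zeta)$ is a rank-one modification of $\AC\inv$, its spectrum consists of the eigenvalues of $\AC\inv$ together with one moving eigenvalue $w(\zeta)$. The eigenvalues of $\AC\inv$ lie in the open right half-plane, because A-stability with invertible $\AC$ forces $\Re a>0$ for every eigenvalue $a$ of $\AC$. For $w$, the matrix determinant lemma combined with the identity $b^T\AC\inv(I-w\AC)\inv\one=R(w)-R(\infty)$ shows that $w$ is characterized by $R(w)=1/\zeta$; for $|\zeta|<1$ this gives $|R(w)|>1$, so A-stability ($|R|\le1$ on $\Re\le0$) forces $\Re w>0$. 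Hence for every $|\zeta|<1$ the spectrum of $\tfrac1\tau\mathbf\Delta(\zeta)$ lies in $\Sigma_{\pi/2}\subset\Sigma_\vartheta$, where $\vartheta>\pi/2$ is the angle furnished by maximal $L^p$-regularity through Theorem~\ref{MaxR}; in particular $\tfrac1\tau\mathbf\Delta(\zeta)-A$ is boundedly invertible and $\mathbf M$ is analytic in the disk. I would then obtain $R$-boundedness of $\{\mathbf M(\zeta)\}$ from the Dunford representation
$$
\mathbf M(\zeta)=\frac1{2\pi\iu}\oint_{\Gamma}\lambda(\lambda-A)\inv\bigl(\lambda I-\tfrac1\tau\mathbf\Delta(\zeta)\bigr)\inv\,\d\lambda ,
$$
with a contour $\Gamma\subset\Sigma_\vartheta$ enclosing the spectrum of $\tfrac1\tau\mathbf\Delta(\zeta)$: since $\{\lambda(\lambda-A)\inv:\lambda\in\Sigma_\vartheta\}$ is $R$-bounded and the contour integral of the absolute value of each entry of $(\lambda I-\tfrac1\tau\mathbf\Delta(\zeta))\inv$ can be bounded uniformly in $\tau$ and $\zeta$, Lemma~\ref{lemma: abs convex hull} applies entrywise. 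Uniformity here is best secured by exploiting the rank-one structure, splitting the resolvent by the Sherman--Morrison formula into the $\zeta$-independent resolvent of $\tfrac1\tau\AC\inv-A$ (handled by a fixed contour around the spectrum of $\AC\inv$) and a scalar correction governed by $R(w)=1/\zeta$.

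For the derivative condition I would use $\mathbf M=I+A(\tfrac1\tau\mathbf\Delta-A)\inv$ to compute
$$
(1-\zeta)(1+\zeta)\,\mathbf M'(\zeta)=-\bigl(\mathbf M(\zeta)-I\bigr)\,\bigl[(1-\zeta)(1+\zeta)\,\mathbf\Delta'(\zeta)\mathbf\Delta(\zeta)\inv\bigr]\,\mathbf M(\zeta),
$$
where the bracketed matrix factor commutes with $A$. The point is that $\mathbf\Delta'(\zeta)\mathbf\Delta(\zeta)\inv$ has at worst a simple pole at $\zeta=1/R(\infty)$, and since real Runge--Kutta coefficients give $R(\infty)\in[-1,1]$ with $|R(\infty)|\le1$, this pole (if present) sits at $\zeta=\pm1$ and is cancelled by the factor $(1-\zeta)(1+\zeta)$ — exactly the role this factor played in the Crank--Nicolson proof. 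The bracket is then a uniformly bounded matrix-valued function, so $(1-\zeta)(1+\zeta)\mathbf M'(\zeta)$ is a product of the $R$-bounded families $\{\mathbf M-I\}$ and $\{\mathbf M\}$ with a bounded factor, hence $R$-bounded. Taking radial limits and using that the closure of an $R$-bounded set is $R$-bounded produces the boundary symbols in \eqref{M-tilde-set}, and Blunck's theorem yields the claimed estimate, from which the bound for the individual stage quantities follows.

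I expect the main obstacle to be the uniform control, in both $\tau$ and $\zeta$, of the matrix resolvent $(\tfrac1\tau\mathbf\Delta(\zeta)-A)\inv$ without assuming that $\mathbf\Delta(\zeta)$ is diagonalizable: the moving eigenvalue sweeps from near $0$ to $\infty$ as $\zeta$ traverses the disk and approaches $\pm1$, so the contour $\Gamma$ must be adapted to $\zeta$ (a circle of radius comparable to $|w(\zeta)|$ around the moving eigenvalue), and one must keep the eigenvector conditioning of $\mathbf\Delta(\zeta)$ — equivalently, the scalar Schur complement in the Sherman--Morrison form — bounded uniformly so that the constant $C_0$ in Lemma~\ref{lemma: abs convex hull} is independent of $\tau$ and $\zeta$. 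Confirming that the factor $(1-\zeta)(1+\zeta)$ is precisely what is needed at the boundary points $\zeta=\pm1$, where $\mathbf\Delta$ may have a pole and where the moving eigenvalue tends to $0$ or $\infty$, is the accompanying delicate bookkeeping.
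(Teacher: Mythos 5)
Your strategy coincides with the paper's: the same matrix symbol $\Delta(\zeta)=\bigl(\AC+\frac{\zeta}{1-\zeta}\one b^T\bigr)\inv$ (your Sherman--Morrison expression $\AC\inv-\frac{\zeta}{1-\zeta R(\infty)}\AC\inv\one b^T\AC\inv$ is exactly this matrix, since $b^T\AC\inv\one=1-R(\infty)$), the same spectral localization $\sigma(\Delta(\zeta))\subseteq\sigma(\AC\inv)\cup\{z:R(z)\zeta=1\}$ with the same A-stability argument placing it in the right half-plane, and the same route through a Cauchy-type integral, Lemma~\ref{lemma: abs convex hull} and Theorem~\ref{thm:Blunck}. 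Where you deviate, namely in the derivative term, your identity $(1-\zeta)(1+\zeta)M'(\zeta)=-(M(\zeta)-I)\bigl[(1-\zeta)(1+\zeta)\Delta'(\zeta)\Delta(\zeta)\inv\bigr]M(\zeta)$ with $\Delta'(\zeta)\Delta(\zeta)\inv=-\frac{1}{(1-\zeta)(1-\zeta R(\infty))}\AC\inv\one b^T$ is a genuine simplification where it works: it reduces the second half of \eqref{M-RK} to the first, exactly as in the BDF proof, whereas the paper differentiates the contour integral and repeats the case analysis.

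There are, however, two gaps. First, your claim that $\Delta'(\zeta)\Delta(\zeta)\inv$ has ``at worst a simple pole at $\zeta=1/R(\infty)$'' is incorrect: the displayed formula shows it always has, in addition, a simple pole at $\zeta=1$. For $|R(\infty)|<1$ and for $R(\infty)=-1$ the prefactor $(1-\zeta)(1+\zeta)$ removes all singularities and your argument closes; but for $R(\infty)=1$ (which the theorem does not exclude, and which the paper treats as case (c)) the two poles merge into a double pole at $\zeta=1$, the bracket becomes $-\frac{1+\zeta}{1-\zeta}\AC\inv\one b^T$, which is unbounded on the disk, and the factorization into ($R$-bounded)$\times$(bounded)$\times$($R$-bounded) collapses; you would need a separate argument there, e.g.\ the paper's analysis of the eigenvalue $z_\infty(\zeta)\sim c\zeta/(1-\zeta)$ escaping to infinity. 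Second, the uniform-in-$\tau$ and $\zeta$ bounds $\int_\Gamma\|(z-\Delta(\zeta))\inv\|\,|\d z|\le C$, which you defer as ``the main obstacle,'' are where essentially all of the paper's work lies: one needs a circle of radius proportional to $|1-\zeta|$ around the moving eigenvalue $z_0(\zeta)=1-\zeta+O((1-\zeta)^2)$ near $\zeta=1$ together with the lower bound $|1-R(z)\zeta|\ge r/2$ on that circle (and the analogous analysis near $\zeta=-1$ when $R(\infty)=-1$). You correctly identify the mechanism but do not carry it out, so the $R$-boundedness of $\{M(\zeta)\}$ itself rests on an unproven uniform estimate.
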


\begin{proof} 
We use the generating functions
\begin{equation*}
    u(\zeta)=\sum_{n=0}^\infty u_n \zeta^n, \qquad U(\zeta)=\sum_{n=0}^\infty U_n \zeta^n \andquad F(\zeta)=\sum_{n=0}^\infty f_n \zeta^n,
\end{equation*}
where $U_n = (U_{ni})_{i=1}^s\in X^s$ and
$F_n = (f(t_n+c_i\tau))_{i=1}^s\in X^s$. We write $AU_n=(AU_{ni})_{i=1}^s$ and in this way consider $A$ in an obvious way as an operator on $X^s$, that is, we write $A$ instead of the Kronecker product $I_s\otimes A$ for brevity.

Following \cite{LO93}, we define the $s\times s$ matrix-valued function
\begin{equation*}
    \Delta(\zeta) = \left(\AC + \frac{\zeta}{1-\zeta}\one b^T\right)\inv,
\end{equation*}
which will be a key object in our discrete maximal regularity analysis for Runge--Kutta methods. It will play a similar role as $\delta(\zeta)$ in Sections 3 and 4, but is now matrix-valued instead of scalar-valued.
%
The formula of \cite[Lemma 2.4]{LO93},
\begin{equation}\label{Delta-z}
(\Delta(\zeta)-z)\inv = \AC(I-z\AC)\inv
+ (I-z\AC)\inv \one b^T (I-z\AC)\inv \frac \zeta {1-R(z)\zeta},
\end{equation}
shows that for a Runge--Kutta method with invertible matrix $\AC$,  the spectrum of $\Delta(\zeta)$ satisfies
    \begin{equation*}
        \sigma(\Delta(\zeta)) \subseteq \sigma(\AC\inv) \cup \big\{ z \in \C \ : \ R(z)\zeta = 1 \big\} .
    \end{equation*}
    Hence, for an A-stable method the spectrum of  $\Delta(\zeta)$ is contained in the closed right half-plane without $0$ for $|\zeta|\le 1$ with $\zeta\ne 1$, since $|R(z)|\ge 1$ requires $\Re z \ge 0$ by A-stability and $R(0)=1$.

It was shown in \cite[Proposition~2.1, equation (2.9)]{LO93} that
\begin{equation}\label{U-RK}
    U(\zeta)= \Big(\frac{\Delta(\zeta)}{\tau} - A \Big)\inv F(\zeta).
\end{equation}
Hence,
$$
\dot U(\zeta)=AU(\zeta)+F(\zeta) = M(\zeta)F(\zeta) \quad\hbox{ with }\quad
M(\zeta)= \frac{\Delta(\zeta)}{\tau}\Big(\frac{\Delta(\zeta)}{\tau} - A \Big)\inv .
$$
In view of Theorem~\ref{thm:Blunck} on the space $X^s$ instead of $X$, it suffices to prove that 
\begin{equation}\label{M-RK}
\big\{ M(\zeta)\,:\, |\zeta|\le 1, \zeta\ne 1\big\} \cup \big\{ (1+\zeta)(1-\zeta)M'(\zeta)\,:\, |\zeta|\le 1, \zeta\ne \pm1\big\}
\ \hbox{ is $R$-bounded. }
\end{equation}
We use the Cauchy-type integral formula
$$
M(\zeta)=\frac1{2\pi i} \int_\Gamma (z-\Delta(\zeta))\inv \otimes \frac z\tau \Bigl( \frac z\tau - A \Bigr)\inv \,
 \, \d z\,,
$$
where $\Gamma$ is a union of circles centered at the eigenvalues of $\Delta(\zeta)$ and lying in the sector $\Sigma_\vartheta$ of $R$-boundedness of
$\{\lambda(\lambda-A)^{-1}:\lambda\in \Sigma_\vartheta\}  $ with $\vartheta>\frac\pi2$. We also use the integral formula differentiated with respect to $\zeta$. We insert formula \eqref{Delta-z} and its derivative with respect to $\zeta$ in the integrands. The estimates required for proving \eqref{M-RK} are different in the three cases $|R(\infty)|<1$, $R(\infty)=-1$, and $R(\infty)=+1$, which in the following are studied in items (a), (b), and (c), respectively.

(a) We consider first the case where $|R(\infty)|<1$. We distinguish two situations:

(i) If $\zeta$ with $|\zeta|\le 1$ is bounded away from $1$, $|\zeta-1|\ge c >0$, then all eigenvalues of $\Delta(\zeta)$ have non-negative real part and are bounded away from $0$. Therefore the radii of all circles can be chosen to have a fixed lower bound (depending on $c>0$ and $\vartheta>\frac\pi2$), and we then have for
$|\zeta|\le 1$ with $|\zeta-1|\ge c >0$ that $|1-R(z)\zeta|\ge c'>0$ uniformly for $z$ on each circle. This yields
\begin{equation}\label{Delta-int-bounds}
\begin{aligned}
 \int_\Gamma \|(z-\Delta(\zeta))\inv \| \, |\d z| \le C,
\\
\int_\Gamma \|(1+\zeta)(1-\zeta) \frac{\partial}{\partial\zeta}(z-\Delta(\zeta))\inv\| \, |\d z| \le C,
\end{aligned}
\end{equation}
where $\|\cdot\|$ denotes an arbitrary matrix norm.

(ii) If $\zeta$ with $|\zeta|\le 1$ is close to $1$, then the implicit function theorem yields that there is a unique $z_0(\zeta)$ near $0$ with $R(z_0(\zeta))\zeta=1$, and we obtain $z_0(\zeta)=1-\zeta+O((1-\zeta)^2)$, so that for suffciently small $|1-\zeta|$ we have
$|z_0(\zeta)|\ge \frac12 |1-\zeta|$. By A-stability, we further have $\Re z_0(\zeta)\ge 0$ for $|\zeta|\le 1$. The radius $r$ of the circle in $\Sigma_\vartheta$ around $z_0(\zeta)$ can be chosen proportional to $|z_0(\zeta)|$, and hence to $|1-\zeta|$, depending on $\vartheta>\frac\pi2$. For $z$ on this circle we have
$$
1-R(z)\zeta = R(z_0(\zeta))\zeta-R(z)\zeta =
\bigl( (z_0(\zeta)-z) + O(z_0(\zeta)-z)^2) \bigr)\zeta,
$$
so that $|1-R(z)\zeta|\ge r/2$ on this circle. This yields again the bounds \eqref{Delta-int-bounds}, uniformly for $\zeta$ in a small neighbourhood of $1$ with $|\zeta|\le 1$.

We thus have proved the bounds \eqref{Delta-int-bounds} uniformly for $|\zeta|\le 1$, $\zeta\ne 1$.
By Theorem~\ref{MaxR} and Lemma~\ref{lemma: abs convex hull}, the bounds
 \eqref{Delta-int-bounds} yield \eqref{M-RK},  in the considered case where $|R(\infty)|<1$.

(b) We now consider the case $R(\infty)=-1$.  

(i) If $\zeta$ is bounded away from 
both $1$ and $-1$, the proof is the same as part (i) of (a). 

(ii) If $\zeta$ is close to $1$, the 
proof is the same as part (ii) of (a). 

(iii) If $\zeta$ is close to $-1$, we proceed as follows. A-stability and $R(\infty)=-1$ imply that
$$
R(z) = -1 -cz^{-1}  + O(z^{-2}) \quad\hbox{for } z\to\infty, \quad\hbox{ with } c>0.
$$
For $\zeta$ close to $-1$, there exists therefore a unique $z_\infty(\zeta)$ of large absolute value and with non-negative real part such that $R(z_\infty(\zeta))\zeta=1$. The Cauchy-type integrals then contain a contribution from a circle around $z_\infty(\zeta)$,
contained in $\Sigma_\vartheta$, with a radius that can be chosen proportional to $|z_\infty(\zeta)|$. The distance of this circle from the origin can also be chosen proportional to $|z_\infty(\zeta)|$. For $z$ on this circle we then have
$$
1-R(z)\zeta = R(z_\infty(\zeta))\zeta -R(z)\zeta = -c(z_\infty(\zeta)^{-1}-z^{-1})\zeta + O(z_\infty(\zeta)^{-2})
$$
and therefore $|1-R(z)\zeta|$ is bounded from below by a positive constant times
$|z_\infty(\zeta)^{-1}|$, which in turn is bounded from below by a positive constant times $|1+\zeta|$. With \eqref{Delta-z} it follows that on this circle,
$$
\| (\Delta(\zeta)-z)^{-1} \| +\| (1+\zeta)\frac{\partial}{\partial\zeta}(\Delta(\zeta)-z)^{-1} \|\le C |z|^{-2} |1+\zeta|^{-1} \le C |1+\zeta|.
$$
This yields \eqref{Delta-int-bounds} (note that the factor $1+\zeta$ in the second integral of \eqref{Delta-int-bounds} is now needed).
When $\zeta$ is away from $-1$, the proof is the same as 
part (i) of (a). 
We therefore obtain  \eqref{M-RK} also in the case $R(\infty)=-1$.

(c) The remaining case $R(\infty)=1$ can be dealt with in the same way. 
We now have
$$
R(z) = 1 +cz^{-1}  + O(z^{-2}) \quad\hbox{for } z\to\infty, \quad\hbox{ with } c>0,
$$
and the bounds \eqref{Delta-int-bounds} 
can be obtained by the same arguments 
as in the case $R(\infty)=-1$, 
replacing $1+\zeta$ by $1-\zeta$ on every occurrence.

We have thus obtained  \eqref{M-RK} for every A-stable Runge--Kutta method with invertible coefficient matrix $\AC$.
Theorem~\ref{thm:Blunck} now yields the stated result.
 \end{proof}

By \cite[Lemma~3.1]{LO93}, we have for a Runge--Kutta method with invertible coefficient matrix $\AC$ that
\begin{equation}\label{u-from-U}
        u_{n+1} = b^T \AC\inv \sum_{k=0}^n R(\infty)^{n-k}U_k,
    \end{equation}
 and so we obtain the following corollary.

\begin{corollary}
 {\it   Under the assumptions of Theorem~\ref{theorem: R-K}, and if $|R(\infty)|<1$,
we have
    \begin{equation*}
        \Big\|\Bigl( \frac{u_{n}-u_{n-1}}{\tau} \Bigr)_{n=1}^N\Big\|_{\ell^p(X)} + \big\|(A u_n )_{n=1}^N\big\|_{\ell^p(X)} \leq C_{p,X} \sum_{i=1}^s \big\|(f(t_n+c_i\tau))_{n=0}^N\big\|_{\ell^p(X)},
    \end{equation*}
    where the constant is independent of $N$ and $\tau$.
    }
\end{corollary}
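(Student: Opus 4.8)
The plan is to transfer the stage estimates already secured in Theorem~\ref{theorem: R-K} --- control of $(\dot U_{ni})_n$ and $(AU_{ni})_n$ in $\ell^p(X)$ --- onto the two quantities $(u_n-u_{n-1})/\tau$ and $Au_n$. These are governed by quite different mechanisms: the difference quotient can be read off directly from the update formula of the scheme, whereas the bound on $Au_n$ must pass through the representation \eqref{u-from-U} and will hinge on the summability of the geometric kernel $R(\infty)^n$. I would treat the two terms separately.

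First I would dispose of the difference quotient, which needs no stability input at all. The second line of \eqref{eq: R-K method} gives, after an index shift,
\[
\frac{u_n-u_{n-1}}{\tau} = \sum_{i=1}^s b_i\, \dot U_{(n-1)i},
\]
so that
\[
\Big\|\Big(\tfrac{u_n-u_{n-1}}{\tau}\Big)_{n=1}^N\Big\|_{\ell^p(X)} \le \sum_{i=1}^s |b_i|\,\big\|(\dot U_{ni})_{n=0}^{N-1}\big\|_{\ell^p(X)},
\]
and the right-hand side is already bounded by Theorem~\ref{theorem: R-K}. In particular this part does not require $|R(\infty)|<1$; the hypothesis will only be needed for the second term.

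Next I would handle $Au_n$. Applying $A$ to \eqref{u-from-U} yields
\[
Au_{n+1} = b^T\AC\inv \sum_{k=0}^n R(\infty)^{\,n-k}\, AU_k, \qquad AU_k = (AU_{ki})_{i=1}^s,
\]
so, writing $v_k = b^T\AC\inv AU_k \in X$, the sequence $(Au_{n+1})_n$ is the discrete convolution of the scalar kernel $g_m = R(\infty)^m$ with the $X$-valued sequence $(v_k)$. This is the one delicate step, and the reason the corollary is restricted to $|R(\infty)|<1$: that hypothesis is exactly what makes $g$ summable, with $\|g\|_{\ell^1}=(1-|R(\infty)|)^{-1}$, so that the vector-valued Young inequality gives
\[
\big\|(Au_n)_{n=1}^N\big\|_{\ell^p(X)} \le \frac{1}{1-|R(\infty)|}\,\big\|(v_k)_k\big\|_{\ell^p(X)} \le \frac{C}{1-|R(\infty)|}\sum_{i=1}^s \big\|(AU_{ki})_k\big\|_{\ell^p(X)},
\]
where $C$ depends only on the fixed, $\tau$-independent entries of $b^T\AC\inv$. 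Theorem~\ref{theorem: R-K} again bounds the right-hand side, and adding the two estimates finishes the proof. The kernel $R(\infty)^n$ fails to lie in $\ell^1$ precisely when $|R(\infty)|=1$, as for the Gauss methods ($R(\infty)=(-1)^s$) and Crank--Nicolson ($R(\infty)=-1$), which is why those methods are excluded; for Radau IIA, where $R(\infty)=0$, the kernel is finitely supported and the argument is immediate.
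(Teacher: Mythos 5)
Your proof is correct and is precisely the argument the paper intends: the corollary is stated as an immediate consequence of \eqref{u-from-U} and Theorem~\ref{theorem: R-K}, and you have filled in the two implicit steps (reading off the difference quotient from the update formula, and applying $\ell^1\ast\ell^p\subset\ell^p$ to the geometric kernel $R(\infty)^m$, summable exactly because $|R(\infty)|<1$). Your closing remark correctly identifies why the Gauss and Crank--Nicolson methods are excluded here while Radau IIA is covered trivially.
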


\section{Space-time
full discretizations}
\label{section: full discretization}

Let $X$ be a UMD space
and let $X_h$, $h>0$, be a family of closed subspaces
of $X$ such that there exist  linear projection operators
$P_h: X\rightarrow X_h$ satisfying
\begin{align}\label{P-h}
&\|P_hu\|_{X}\leq C_0 \|u\|_X,\qquad\forall\,\, u\in X,
\end{align}
where the constant $C_0$ is independent of $h$.
Consider the problem
\begin{equation}
\label{IVPD}
    \left\{
    \begin{aligned}
        &u'_h(t) =   A_hu_h(t)+ f_h(t), \qquad  \, t>0,\\
        &u_h(0) = 0,
    \end{aligned}
    \right.
\end{equation}
where $A_h$ is the generator
of a bounded analytic semigroup
on $X_h$ and $f_h(t)$, $u_h(t)\in X_h$
for all $t>0$.
We have the following result.
\begin{theorem}
Assume \eqref{P-h} and that the collection of operators
$\{\lambda(\lambda-A_h)^{-1}: \lambda\in\Sigma_{\vartheta}\}$
is $R$-bounded in $B(X_h)$
with an $R$-bound $C_R$ that is independent of $h$. Let $\vartheta>\pi-\alpha$, where $\alpha$ is the angle of A($\alpha$)-stability of the time discretization method considered. Then, all theorems of Sections 3 to 5 hold for the numerical methods applied to \eqref{IVPD}, with constants $C_{p,X_h}$
that are independent of
both $\tau$ and $h$.
%
%
\end{theorem}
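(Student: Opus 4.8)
The plan is to observe that none of the proofs in Sections~3 to~5 uses any property of the pair $(X,A)$ beyond the two that are quantified by the hypotheses of the present theorem: that the space is UMD, and that the set $\{\lambda(\lambda-A)\inv:\lambda\in\Sigma_\vartheta\}$ is $R$-bounded for a sufficiently large angle $\vartheta$. Every constant produced there is assembled from the $R$-bound of this resolvent set and from the multiplier constant $C_{p,X}$ of Blunck's theorem (Theorem~\ref{thm:Blunck}), the latter being itself governed by the UMD constant of the underlying space. The plan is therefore to show that both quantities remain bounded uniformly in $h$ when $(X,A)$ is replaced by $(X_h,A_h)$, and then to rerun the earlier proofs verbatim on $X_h$ (respectively $X_h^s$ for Runge--Kutta methods).

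First I would control the UMD constant. Each $X_h$ is a closed subspace of the fixed UMD space $X$, and I claim that its UMD constant is no larger than that of $X$. Indeed, for $f\in L^p(\R;X_h)\subset L^p(\R;X)$ the truncated integrals $\int_{|t-s|>\eps}(t-s)\inv f(s)\,\d s$ are $X_h$-valued, and since $L^p(\R;X_h)$ is a closed subspace of $L^p(\R;X)$, their $L^p$-limit $Hf$ again lies in $L^p(\R;X_h)$; hence $\|Hf\|_{L^p(\R;X_h)}=\|Hf\|_{L^p(\R;X)}\le\beta_{p,X}\|f\|_{L^p(\R;X)}$, where $\beta_{p,X}$ is the norm of $H$ on $L^p(\R;X)$. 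Thus $\beta_{p,X_h}\le\beta_{p,X}$ uniformly in $h$ (the closedness of $X_h$ already suffices here; the bounded projection \eqref{P-h} gives the same conclusion via $P_hHf=Hf$), and consequently the constant $C_{p,X_h}$ in Theorem~\ref{thm:Blunck}, being governed by the UMD constant of the space, is bounded uniformly in $h$ by its value $C_{p,X}$ for the ambient space.

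The second input is handed to us directly: by assumption $\{\lambda(\lambda-A_h)\inv:\lambda\in\Sigma_\vartheta\}$ is $R$-bounded in $B(X_h)$ with $R$-bound $C_R$ independent of $h$, and the condition $\vartheta>\pi-\alpha$ is precisely the sector width used in Sections~4 and~5 (for A-stable methods $\alpha=\tfrac\pi2$, so $\vartheta>\tfrac\pi2$, which is exactly what Section~3 and the A-stable cases require). With these two uniform bounds in hand, every step of the earlier proofs carries over with $A$ replaced by $A_h$ and $X$ by $X_h$: the generating-function identities for $M(\zeta)$ and $\Delta(\zeta)$ are purely algebraic and unchanged; the containments $\{M(\zeta)\}\subset\{\lambda(\lambda-A_h)\inv:\lambda\in\Sigma_\vartheta\}$ hold for the same stability and spectral reasons; the closure properties of $R$-bounded families under sums, products and absolute convex hulls (including Lemma~\ref{lemma: abs convex hull}) are space-independent; and Theorem~\ref{thm:Blunck} applies on $X_h$. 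The emerging constants depend only on $p$, $\beta_{p,X}$ and $C_R$, hence on neither $\tau$ nor $h$.

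The only genuinely new point — and the step I expect to be the crux — is the uniform-in-$h$ control of the UMD constant of the family $X_h$. Once that is secured by the closed-subspace argument above, the remainder is a matter of re-reading Sections~3 to~5 and confirming that no further property of $(X,A)$ was ever invoked; in particular the simpler $L^q$ criterion for $R$-boundedness is never needed, since $R$-boundedness of the resolvent is assumed outright.
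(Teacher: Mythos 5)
Your proposal is correct, but it takes a genuinely different route from the paper. The paper never discusses the UMD constant of $X_h$ at all: it composes the resolvent with the projection, notes that $\{\lambda(\lambda-A_h)^{-1}P_h:\lambda\in\Sigma_\vartheta\}$ is $R$-bounded in $B(X)$ with $R$-bound at most $C_0C_R$, writes the multiplier as $M_h(\zeta)=\frac{\delta(\zeta)}{\tau}\bigl(\frac{\delta(\zeta)}{\tau}-A_h\bigr)^{-1}P_h$ acting on the \emph{fixed} ambient space $X$, and then applies Theorem~\ref{thm:Blunck} on $X$ itself, so that the multiplier constant is literally $C_{p,X}$ and the question of its dependence on $h$ never arises. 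This is why the hypothesis \eqref{P-h} is in the statement. Your route instead works intrinsically on $X_h$: the closed-subspace argument for the Hilbert transform is correct and does show $\beta_{p,X_h}\le\beta_{p,X}$, but the step ``$C_{p,X_h}$ is governed by the UMD constant, hence uniform in $h$'' appeals to a fact that is not contained in the black-box statement of Theorem~\ref{thm:Blunck} as quoted (which only asserts existence of \emph{some} constant depending on $p$ and the space); it is true, but verifying it requires opening up Blunck's proof. What your approach buys is generality --- it shows the uniformly bounded projections are not really needed, only closedness of $X_h$ in $X$ --- while the paper's approach buys self-containedness, using only the stated form of the multiplier theorem plus the elementary fact that post-composing an $R$-bounded family with a single bounded operator preserves $R$-boundedness. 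You correctly identified the crux of your own argument; with that one quantitative fact about Blunck's constant granted, the rest of your reduction to Sections~3--5 is exactly as in the paper.
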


\begin{proof}
Since $\lambda(\lambda-A_h)^{-1}$ is $R$-bounded in $B(X_h)$
for $\lambda\in\Sigma_{\vartheta}$,
it follows that the collection of operators
\begin{equation}\label{R-h}
\{\lambda(\lambda-A_h)^{-1}P_h: \lambda\in\Sigma_{\vartheta}\}
\quad\mbox{is $R$-bounded in $B(X)$ }
\end{equation}
and the $R$-bound is at most $C_0C_R$.
The numerical solution given by the backward Euler scheme satisfies
\begin{align*}
\frac{u_{h,n}-u_{h,n-1}}{\tau}=A_hu_{h,n}+P_hf_{h,n},
\end{align*}
and so it follows that the generating functions are related by
\begin{align*}
\frac{1-\zeta}{\tau}u_{h}(\zeta)=M_h(\zeta)f_h(\zeta) \quad\hbox{ with }\ M_h(\zeta)=\frac{\delta(\zeta)}{\tau}
\bigg(\frac{\delta(\zeta)}{\tau}-A_h\bigg)^{-1}P_h  .
\end{align*}
In the same way as in the proof of Theorem~\ref{theorem: IE}, it is concluded from \eqref{R-h} that $M_h(\zeta)$ satisfies the $R$-boundedness condition
\eqref{M-Euler-R-bounded} with an $R$-bound that is independent of $\tau$ and $h$, and then Theorem~\ref{thm:Blunck}
yields the desired discrete maximal $\ell^p$-regularity bound, uniformly in $\tau$ and~$h$.

The results for the other methods (Crank--Nicolson,
BDF and A-stable Runge--Kutta)
are proved in the same way.
\end{proof}

\begin{remark}
If $\Omega$ is a bounded smooth domain in $\R^d$ $(d\geq 1)$,
$X=L^q(\Omega)$, $X_h$ is the standard finite element subspace
of $X$,
$A$ is a second-order elliptic partial differential operator
and $A_h$ is its finite element approximation,
then the $R$-boundedness of
$\{\lambda(\lambda-A_h)^{-1}: \lambda\in\Sigma_{\vartheta}\}$
in $B(X_h)$ has been proved in \cite{Li}
for some $\vartheta>\pi/2$ that is
independent of $h$.
The operator $P_h$ can be chosen as
the $L^2$-projection operator.
\end{remark}

\section{Logarithmically quasi-maximal $\ell^\infty$-regularity}
\label{section: ell infty est}

In this section we give some bounds that show maximal $\ell^\infty$-regularity up to a factor that is logarithmic in the number of time steps. We note that the results of this section are valid for an arbitrary complex Banach space $X$ (not necessarily a UMD space as in the previous sections), and $R$-boundedness plays no role in this section. We just assume that $A$ is the generator of analytic semigroup on $X$, and
$\lambda (\lambda -A)^{-1}$ is uniformly bounded for $\lambda\in\Sigma_\vartheta$ with an angle $\vartheta>\pi-\alpha$ for the angle $\alpha$ of A($\alpha$)-stability of the numerical method. We consider again  $k$-step BDF methods with $k\le 6$ and A-stable Runge-Kutta methods with an invertible coefficient matrix and $|R(\infty)|<1$.

We start with the $k$-step BDF method,
with initial condition $u_0=u_1=\ldots=u_{k-1}=0$ as in Section~\ref{section: BDF}.
By \eqref{u-bdf}, the numerical solution
can be expressed as a discrete convolution
\begin{equation}\label{Awn-m}
u_n = \tau \sum_{j=k}^n e_{n-j}(\tau A) f_j , \qquad n\ge k,
\end{equation}
with the generating function
$$
\tau \sum_{n=0}^\infty e_n(\tau A) \zeta^n = \biggl(\frac{\delta(\zeta)}\tau-A\biggr)\inv.
$$
This can be viewed as a convolution quadrature
approximation of the exact solution at $t_n=n\tau$,
\begin{equation*}
u(t_n) =\int_0^{t_n} e^{(t_n-t)A} f(t)\,\d t  .
\end{equation*}
Theorem 2.1 in \cite[Theorem 2.1]{Lubich-cqrevisited} (used with $K(\lambda)=(\lambda-A)^{-1}$ and then with $K(\lambda)=A(\lambda-A)^{-1}$) shows that
$$
\| e_n(\tau A) - e^{n\tau A} \|_{B(X)} \le Ct_{n+1}^{-k} \tau^k, \qquad n\ge 0,
$$
and
$$
\| Ae_n(\tau A) - Ae^{n\tau A} \|_{B(X)} \le Ct_{n}^{-1-k} \tau^k, \quad n\ge 1,
\qquad\hbox{and }\  \| Ae_0(\tau A)\| \le C\tau^{-1}.
$$
Since $\| Ae^{t A} \|_{B(X)} \le Ct^{-1}$ for $t>0$,
a direct consequence of the latter estimate is the following.
\begin{lemma}
\label{BDFwg}
    Suppose that $A(\lambda-A)\inv$ is uniformly bounded for $\lambda \in \Sigma_\vartheta$ with an angle $\vartheta>\pi-\alpha$ for the angle of A($\alpha$)-stability of  the $k$-step BDF method, for $1\le k \le 6$. Then we have
    \begin{equation*}
        \|Ae_n(\tau A)\|_{B(X)} \leq C/ t_{n+1},  \qquad\  n\geq 0 .
    \end{equation*}
\end{lemma}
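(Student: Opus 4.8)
The plan is to obtain the bound by comparing the convolution quadrature weights $A e_n(\tau A)$ with the corresponding analytic-semigroup values $A e^{n\tau A}$, and then to invoke only the two families of estimates already recorded immediately above the lemma: the consistency bounds $\|A e_n(\tau A) - A e^{n\tau A}\|_{B(X)} \le C t_n^{-1-k}\tau^k$ for $n\ge 1$ together with $\|A e_0(\tau A)\|_{B(X)} \le C\tau^{-1}$, and the smoothing estimate $\|A e^{tA}\|_{B(X)} \le C t^{-1}$ for $t>0$ (which holds because the assumption on $A(\lambda-A)\inv$ in $\Sigma_\vartheta$ makes $A$ the generator of a bounded analytic semigroup). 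Here $t_n=n\tau$. Since the asserted inequality separates cleanly, I would treat the degenerate index $n=0$ and the range $n\ge 1$ in turn.

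For $n=0$ there is nothing beyond bookkeeping: the quoted bound $\|A e_0(\tau A)\|_{B(X)} \le C\tau^{-1}$ is precisely $C/t_1 = C/t_{n+1}$, and this case must be settled directly, without any semigroup comparison, since $A e^{0\cdot A}=A$ is unbounded. For $n\ge 1$ I would split by the triangle inequality,
\[
\|A e_n(\tau A)\|_{B(X)} \le \|A e_n(\tau A) - A e^{n\tau A}\|_{B(X)} + \|A e^{n\tau A}\|_{B(X)},
\]
and estimate the two terms separately. The smoothing estimate gives $\|A e^{n\tau A}\|_{B(X)} \le C/t_n$, and since $t_{n+1}=(n+1)\tau \le 2n\tau = 2t_n$ for $n\ge 1$, this is $\le 2C/t_{n+1}$. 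For the consistency term I would rewrite $C t_n^{-1-k}\tau^k = C (n\tau)^{-1-k}\tau^k = C n^{-1-k}\tau^{-1}$, and then note that $\frac{n+1}{n^{1+k}} \le \frac{2n}{n^{1+k}} = 2n^{-k} \le 2$ for $n\ge 1$, $k\ge 1$, so that $C t_n^{-1-k}\tau^k \le 2C/((n+1)\tau) = 2C/t_{n+1}$. Adding the two contributions yields $\|A e_n(\tau A)\|_{B(X)} \le C/t_{n+1}$ with a constant independent of $n$ and $\tau$, as required.

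There is no genuine obstacle here: the statement really is a direct consequence of the estimates recalled just before it, which is why it is phrased as a lemma rather than carried out inline. The only points requiring any care are the index shift between $t_n$ and $t_{n+1}$, which is absorbed by the elementary inequality $t_{n+1}\le 2t_n$ for $n\ge 1$, and the separate, semigroup-free handling of the case $n=0$. The admissible range $1\le k\le 6$ and the sector assumption $\vartheta>\pi-\alpha$ enter only through the validity of the two quoted input estimates and need not be revisited.
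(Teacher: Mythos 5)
Your proof is correct and follows exactly the route the paper intends: the paper states the lemma as ``a direct consequence'' of the convolution-quadrature error bounds and the analytic-semigroup smoothing estimate $\|Ae^{tA}\|_{B(X)}\le Ct^{-1}$, and your triangle-inequality argument with the separate treatment of $n=0$ and the elementary index-shift bounds $t_{n+1}\le 2t_n$, $(n+1)n^{-1-k}\le 2$ is precisely the bookkeeping the authors left implicit.
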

\indent Using \refe{Awn-m} and Lemma \ref{BDFwg},
we obtain immediately the following $\ell^\infty$-bound, or more generally $\ell^p$-bound uniformly for $1\le p \le \infty$.
\begin{theorem}\label{BDFLinfty}
Suppose that $A(\lambda-A)\inv$ is uniformly bounded for $\lambda \in \Sigma_\vartheta$ with an angle $\vartheta>\pi-\alpha$ for the angle of A($\alpha$)-stability of  the $k$-step BDF method, for $1\le k \le 6$.    Then the numerical solution
    $(u_n)_{n=k}^N$ of \eqref{def: BDF} with \eqref{starting-from-zero},
    obtained by the $k$-step BDF method with stepsize $\tau$, is bounded by
\begin{align*}
    \|(Au_n)_{n=1}^N\|_{\ell^p(X)}
    &\leq  C\,  \log N\,\|(f_n)_{n=1}^N\|_{\ell^p(X)}   ,
\end{align*}
where the constant $C$ is independent of $N$ and $\tau$ and $1\le p \le \infty$.
\end{theorem}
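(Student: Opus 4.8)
The plan is to combine the explicit convolution representation \eqref{Awn-m} of the BDF solution with the pointwise kernel bound of Lemma~\ref{BDFwg}, thereby reducing the operator-valued estimate to a scalar discrete convolution that is controlled by the discrete Young inequality. Applying $A$ to \eqref{Awn-m} gives
\begin{equation*}
Au_n = \tau \sum_{j=k}^n Ae_{n-j}(\tau A)\, f_j, \qquad n\ge k,
\end{equation*}
and since $u_0=\ldots=u_{k-1}=0$ the contributions with $n<k$ vanish, so replacing the range $n=k,\dots,N$ by $n=1,\dots,N$ on the left-hand side changes nothing, while enlarging the data range to $j=1,\dots,N$ on the right-hand side can only increase the bound.

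First I would insert the estimate $\|Ae_m(\tau A)\|_{B(X)}\le C/t_{m+1}=C/((m+1)\tau)$ from Lemma~\ref{BDFwg}. The crucial mechanism here is that the prefactor $\tau$ of the convolution cancels exactly against the $\tau$ in the denominator of $t_{m+1}=(m+1)\tau$, leaving
\begin{equation*}
\|Au_n\|_X \le C\sum_{j=k}^n \frac{1}{\,n-j+1\,}\,\|f_j\|_X ,
\end{equation*}
which is a $\tau$-independent scalar discrete convolution of the kernel $a_m=1/(m+1)$ with the nonnegative sequence $g_j=\|f_j\|_X$. This cancellation is what produces a bound uniform in the stepsize.

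Next I would invoke the discrete Young inequality in the form $\|a*g\|_{\ell^p}\le \|a\|_{\ell^1}\,\|g\|_{\ell^p}$, which holds uniformly for $1\le p\le\infty$ and thus handles all exponents at once. Since both sequences are supported on indices at most $N$, only the truncated $\ell^1$-norm of the kernel enters, and
\begin{equation*}
\sum_{m=0}^{N} \frac{1}{m+1} \le 1 + \log(N+1) \le C\log N \qquad (N\ge 2).
\end{equation*}
Combining this with $\|g\|_{\ell^p}=\|(f_j)_{j=k}^N\|_{\ell^p(X)}\le\|(f_j)_{j=1}^N\|_{\ell^p(X)}$ yields the asserted bound $\|(Au_n)_{n=1}^N\|_{\ell^p(X)}\le C\,\log N\,\|(f_n)_{n=1}^N\|_{\ell^p(X)}$ with a constant independent of $N$, $\tau$, and $p$.

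The single genuine obstacle is conceptual rather than computational: the kernel $1/(m+1)$ fails to belong to $\ell^1(\N)$, so the harmonic series truncated at $N$ is the irreducible source of the $\log N$ factor. This is exactly why the result is only \emph{logarithmically} quasi-maximal under the present hypothesis of mere uniform boundedness of $\lambda(\lambda-A)^{-1}$; eliminating the logarithm would require the stronger $R$-boundedness assumption and the operator-valued Fourier multiplier argument of Sections~3--4 in place of the elementary Young inequality used here.
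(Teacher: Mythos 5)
Your proof is correct and is essentially the paper's argument: the paper simply states that the theorem follows ``immediately'' from the convolution representation \eqref{Awn-m} and Lemma~\ref{BDFwg}, and your write-up supplies exactly the intended details (cancellation of $\tau$ against $t_{m+1}$, discrete Young's inequality, and the truncated harmonic sum giving the $\log N$ factor). No gaps.
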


We now turn to A-stable Runge-Kutta methods. By \eqref{U-RK}, the vector of internal stages $U_n=(U_{ni})_{i=1}^s\in X^s$
can be expressed in terms of the vector of inhomogeneity values used in the $n$th step, $F_n=(f(t_n+c_i\tau))_{i=1}^s$, as a discrete block convolution
\begin{equation}\label{AWn-m}
U_n = \tau \sum_{j=0}^n E_{n-j}(\tau A) F_j \, ,
\end{equation}
with the generating function
$$
\tau \sum_{n=0}^\infty E_n(\tau A) \zeta^n = \biggl(\frac{\Delta(\zeta)}\tau-A\biggr)\inv.
$$
We have the following lemma.
\begin{lemma}
\label{RKQW}  Suppose that $A(\lambda-A)\inv$ is uniformly bounded for $\lambda \in \Sigma_\vartheta$ with an angle $\vartheta>\pi/2$.
    For an A-stable Runge--Kutta method with
    invertible coefficient matrix $\AC$ and $|R(\infty)|<1$,
    we have
    \begin{equation*}
        \|AE_n(\tau A)\|_{B(X^s)} \leq C/t_{n+1} ,\qquad n\geq 0  .
    \end{equation*}
\end{lemma}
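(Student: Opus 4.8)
The plan is to follow the same convolution-quadrature route used for the BDF method in Lemma~\ref{BDFwg}, now in the matrix-valued Runge--Kutta setting of \cite{LO93}. By the generating-function relation for the weights displayed just after \eqref{AWn-m}, the block operator $\tau E_n(\tau A)$ is the $n$-th Taylor coefficient at $\zeta=0$ of $(\Delta(\zeta)/\tau - A)\inv$, so that $\tau A E_n(\tau A)$ is the $n$-th coefficient of $A(\Delta(\zeta)/\tau-A)\inv = M(\zeta)-I$, with $M(\zeta)$ as in the proof of Theorem~\ref{theorem: R-K} and analytic in $|\zeta|<1$. Hence, for $n\ge 1$, Cauchy's formula on a circle $|\zeta|=\rho<1$ gives $\tau AE_n(\tau A) = \frac{1}{2\pi i}\oint_{|\zeta|=\rho} M(\zeta)\,\zeta^{-n-1}\,\d\zeta$, and the whole task reduces to showing that the $B(X^s)$-coefficients of $M(\zeta)$ decay like $C/(n+1)$, since $\tau/t_{n+1}=1/(n+1)$; the case $n=0$ is the separate estimate $\|AE_0(\tau A)\|\le C\tau\inv=C/t_1$, which follows from $\Delta(0)=\AC\inv$ and the uniform bound on $A(\lambda-A)\inv$.

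To establish this coefficient decay I would follow the convolution-quadrature error analysis of \cite{LO93}, in direct analogy with the use of \cite{Lubich-cqrevisited} in the BDF case: compare $AE_n(\tau A)$ with the corresponding continuous convolution weight of the analytic semigroup, whose norm is bounded by $C/t_n$ as a consequence of the sectorial estimate $\|Ae^{tA}\|_{B(X)}\le C/t$ (equivalent to the assumed uniform boundedness of $A(\lambda-A)\inv$ on $\Sigma_\vartheta$, $\vartheta>\pi/2$), and control the quadrature error by a bound of the form $C\,t_n^{-1-q}\tau^{q}$ for $n\ge1$. Adding the two contributions and using $t_{n+1}/t_n\le 2$ then yields $\|AE_n(\tau A)\|\le C/t_{n+1}$. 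For the quadrature error itself I would reuse the Cauchy representation $M(\zeta)=\frac{1}{2\pi i}\int_\Gamma (z-\Delta(\zeta))\inv\otimes \frac z\tau\bigl(\frac z\tau-A\bigr)\inv\,\d z$ from the proof of Theorem~\ref{theorem: R-K} and insert the explicit resolvent formula \eqref{Delta-z}; this splits the integrand into the part $\AC(I-z\AC)\inv$, whose contribution to the weights decays geometrically like $|R(\infty)|^n$ because $|R(\infty)|<1$, and the rank-one part carrying the factor $\zeta/(1-R(z)\zeta)$, which produces the genuine algebraic $1/(n+1)$ decay.

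The decisive point, and the main obstacle, is the singularity at $\zeta=1$. As in part (a)(ii) of the proof of Theorem~\ref{theorem: R-K}, near $\zeta=1$ the factor $\zeta/(1-R(z)\zeta)$ has its only pole at the eigenvalue $z_0(\zeta)$ with $R(z_0(\zeta))\zeta=1$, satisfying $z_0(\zeta)=1-\zeta+O((1-\zeta)^2)$ and $\Re z_0(\zeta)\ge 0$; it is precisely this $z_0(\zeta)\to 0$ behaviour, together with the uniform bound $\|A(z_0(\zeta)/\tau-A)\inv\|\le C$, that produces the $1/(n+1)$ rate after deforming $|\zeta|=\rho$ so as to pass at distance $\sim 1/n$ from $\zeta=1$ and choosing the circle radii in $\Gamma$ proportional to $|z_0(\zeta)|\sim|1-\zeta|$. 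The assumption $|R(\infty)|<1$ is essential here: it excludes the additional boundary singularities near $\zeta=\mp1$ that had to be treated separately in items (b) and (c) of that proof, so that no slower-than-$1/(n+1)$ contribution can arise. I expect the bookkeeping of these matrix-valued contour estimates \emph{uniformly in $\tau$} — essentially redoing the delicate estimates around \eqref{Delta-int-bounds} while tracking the polynomial decay rate in $n$ rather than mere boundedness — to be the technically heaviest part, whereas the final combination with $\|Ae^{tA}\|\le C/t$ to reach $\|AE_n(\tau A)\|\le C/t_{n+1}$ is routine.
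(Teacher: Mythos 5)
You have set up the right objects (the generating function of the weights, the resolvent formula \eqref{Delta-z}, the separate treatment of $n=0$ via $\Delta(0)=\AC\inv$), but the decisive quantitative step --- where the factor $1/(n+1)$ actually comes from --- is not carried out in either of the two routes you sketch, and this is a genuine gap. Route 1 (compare $AE_n(\tau A)$ with a continuous weight and invoke a quadrature error bound $Ct_n^{-1-q}\tau^q$) mirrors the paper's BDF argument, but for Runge--Kutta methods there is no off-the-shelf weight-by-weight estimate of this form to cite: the error analysis in \cite{LO93} is formulated for the convolution acting on data, not for the individual matrix weights $E_n$, and it is not even canonical what ``the corresponding continuous weight'' of the $s\times s$ block $E_n$ should be. Proving such a bound would require exactly the contour estimates you are trying to avoid. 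Route 2 is where the real difficulty sits: uniform boundedness of $M(\zeta)$ on $|\zeta|\le 1$, $\zeta\ne 1$, together with $\|(1-\zeta)M'(\zeta)\|\le C$ (which is the natural output of the estimates around \eqref{Delta-int-bounds}), only yields $O(1)$ Taylor coefficients directly, and $O(\log n/n)$ after one integration by parts on a circle of radius $1-1/n$; your proposed deformation of the $\zeta$-contour ``at distance $\sim 1/n$ from $\zeta=1$'' is not shown to recover the sharp $1/(n+1)$. (A small additional slip: the term $\AC(I-z\AC)\inv$ in \eqref{Delta-z} is independent of $\zeta$, so its contribution to $E_n$ for $n\ge 1$ is exactly zero; it does not ``decay like $|R(\infty)|^n$''.)

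The paper's proof obtains the decay by a different and more direct device: in the Cauchy representation over the keyhole contour $\Gamma$ in the $z$-plane, insert \eqref{Delta-z} and expand $(1-R(z)\zeta)\inv=\sum_{n\ge0}R(z)^n\zeta^n$ as a geometric series, then match powers of $\zeta$. This produces the closed formula \eqref{intEn} for $\tau E_n(\tau A)$, whose integrand carries the explicit factor $R(z)^{n-1}$. The $1/n$ rate then falls out of $|R(z)|\le e^{-\Re z/2}$ for small $|z|$ on the rays $\arg z=\pm\vartheta$ (a consequence of $R(z)=e^z+O(z^2)$ and $\vartheta>\pi/2$) together with $|R(z)|\le\rho<1$ on the rest of $\Gamma$ --- this is precisely where A-stability and $|R(\infty)|<1$ enter --- since $\int_0^\infty\bigl(e^{-(n-1)r|\cos\vartheta|/2}+\rho^n\bigr)(1+r^2)\inv\,\d r\le C/n$. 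If you want to complete your argument, this interchange of the $\zeta$-expansion and the $z$-integral is the missing idea; without it, neither of your routes delivers the stated estimate.
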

\begin{proof}
We use the Cauchy-type integral formula
$$
\biggl(\frac{\Delta(\zeta)}\tau-A\biggr)\inv =
\frac1{2\pi i} \int_\Gamma (z-\Delta(\zeta))\inv \otimes  \Bigl( \frac z\tau - A \Bigr)\inv \,
 \, \d z
$$
with a keyhole contour $\Gamma=\Gamma_1\cup \Gamma_2$
composed of
$$\Gamma_1=\{re^{\pm i\vartheta}: r\ge \eps \}
\quad\mbox{and}\quad
\Gamma_2= \{\eps e^{i\phi}: |\phi|\leq \vartheta\}
$$
with a small $\eps>0$.
On inserting \eqref{Delta-z}, using the geometric series for $(1-R(z)\zeta)^{-1}= \sum_{n=0}^\infty R(z)^n\zeta^n$ and collecting equal powers of $\zeta$ on the left and right-hand sides, we find
 \begin{equation*}
  \tau E_0(\tau A) = \frac{1}{2\pi\iu} \int_{\Gamma}
    \AC(I-z\AC)\inv  \otimes \Bigl( \frac z\tau - A \Bigr)\inv \d z  =
    \tau   \AC(I-\AC\otimes\tau A)\inv
\end{equation*}
and
\begin{equation}\label{intEn}
    \tau E_n(\tau A)  = \frac{1}{2\pi\iu} \int_{\Gamma}
    R(z)^{n-1} (I-z\AC)\inv \one b^T (I-z\AC)\inv  \otimes \Bigl( \frac z\tau - A \Bigr)\inv\d z ,
    \quad\ n\geq1 .
\end{equation}
Since the eigenvalues of $\AC$ have positive real part, we obtain
$$
\| \tau AE_0(\tau A) \|_{B(X^s)} \le C.
$$
Next, we estimate $\tau AE_n(\tau A)$.
Since the stability function $R(z)$
satisfies $R(z)=e^z+O(z^2)$, for sufficiently small $c$ we have
\begin{equation*}
    |R(z)| \leq e^{-\Re z/2} \quad
    \mbox{for}\,\,\, |\arg z|=\vartheta  \,\,\,\mbox{and}\,\,\, 0\leq |z|\leq c ,
    \end{equation*}
and for $z\in \Gamma$ with $|z|\geq c$ we have
$|R(z)|\leq \rho<1$. Then  \refe{intEn} yields, on applying the operator $A$, letting $\eps\to 0$ in the definition of the contour $\Gamma$ and then taking norms,
$$
     \|\tau AE_n(\tau A)\|_{B(X^s)}
    \leq C \int_0^\infty \frac{e^{-(n-1)r |\cos(\vartheta)|/2}+\rho^n} {1+r^2}\,\d r
 \leq C/n ,\qquad
 n\geq 1 .
$$
This completes the proof of Lemma \ref{RKQW}.
 \end{proof}

The identity \refe{AWn-m}  and Lemma \ref{RKQW}, and formula  \eqref{u-from-U} imply the following result.

\begin{theorem}
\label{RKLinfty}
{\it Suppose that $A(\lambda-A)\inv$ is uniformly bounded for $\lambda \in \Sigma_\vartheta$ with an angle $\vartheta>\pi/2$.
For an A-stable Runge--Kutta method with
invertible Runge--Kutta matrix $\AC$ and $|R(\infty)|<1$,
the numerical solution  \eqref{eq: R-K method} is bounded by
\begin{align*}
    \|(Au_{n+1})_{n=0}^{N-1}\|_{\ell^p(X)}  + \|(AU_n)_{n=0}^{N-1}\|_{\ell^p(X^s)} \leq &\ C\, \log N  \,\|(F_n)_{n=0}^{N-1}\|_{\ell^p(X^s)} , 
\end{align*}
where the constant $C$ is independent 
of $N$ and $\tau$ and $1\le p \le \infty$.
}
\end{theorem}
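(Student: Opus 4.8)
The plan is to reduce both estimates to discrete Young inequalities for convolutions, applied to the representation formulas already at hand, with the logarithmic factor emerging from a non-summable harmonic kernel. The genuinely hard analytic work, namely the kernel bound, has already been carried out in Lemma~\ref{RKQW}, so what remains is essentially a pair of convolution estimates.

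First I would estimate the internal stages. Applying $A$ to the block convolution \eqref{AWn-m} gives $AU_n = \tau \sum_{j=0}^n AE_{n-j}(\tau A) F_j$, where $A=I_s\otimes A$ commutes with the matrix coefficients. Inserting the bound of Lemma~\ref{RKQW}, $\|AE_{n-j}(\tau A)\|_{B(X^s)} \le C/t_{n-j+1} = C/((n-j+1)\tau)$, the factor $\tau$ cancels and one is left with the stepsize-free scalar estimate
\[
\|AU_n\|_{X^s} \le C \sum_{m=0}^n \frac1{m+1}\,\|F_{n-m}\|_{X^s}.
\]
This cancellation of $\tau$ is precisely what yields a bound uniform in the stepsize.

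Next I would apply the discrete Young inequality with the kernel $k_m=1/(m+1)$, truncated to $0\le m\le N-1$. Since its $\ell^1$-norm satisfies $\sum_{m=0}^{N-1} 1/(m+1)\le 1+\log N\le C\log N$, Young's inequality gives, uniformly for $1\le p\le\infty$,
\[
\|(AU_n)_{n=0}^{N-1}\|_{\ell^p(X^s)} \le C\log N\,\|(F_j)_{j=0}^{N-1}\|_{\ell^p(X^s)}.
\]
To pass from the stages to $Au_{n+1}$, I would invoke \eqref{u-from-U}, which writes $u_{n+1}$ as a geometric discrete convolution of the $U_k$ with kernel $R(\infty)^{n-k}$, followed by the fixed bounded linear map $v\mapsto b^T\AC\inv v$ from $X^s$ to $X$. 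Applying $A$ and using $|R(\infty)|<1$, the geometric kernel is summable, with $\sum_{m\ge0}|R(\infty)|^m=(1-|R(\infty)|)^{-1}$, so one further application of Young's inequality transfers the bound on $(AU_k)$ to $(Au_{n+1})$ and completes the estimate.

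The main point, more than a true obstacle, is the provenance of the $\log N$: the kernel decays only like $1/t_{n+1}$, mirroring the sharp smoothing bound $\|Ae^{tA}\|_{B(X)}\le C/t$, and the sequence $1/(m+1)$ is not summable. Hence the $\ell^1$-norm of the truncated kernel necessarily grows like $\log N$, and this logarithmic loss is intrinsic to the present argument; it reflects the breakdown of genuine maximal $L^\infty$-regularity already in the continuous setting, where $\int_0^t (t-s)^{-1}\,\d s$ diverges.
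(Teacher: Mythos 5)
Your proposal is correct and follows exactly the route the paper intends: the paper's proof consists of the single sentence that \eqref{AWn-m}, Lemma~\ref{RKQW} and \eqref{u-from-U} imply the result, and your two applications of the discrete Young inequality (harmonic kernel of $\ell^1$-norm $\le 1+\log N$ for the stages, then the summable geometric kernel $|R(\infty)|^m$ to pass to $Au_{n+1}$) are precisely the omitted details.
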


A similar logarithmically quasi-maximal regularity result
was proved in \cite{LB15} for the discontinuous
Galerkin (DG) solutions of the heat equation
with an extra logarithmic factor:
\begin{align*}
\|\partial_tu_\tau\|_{L^p(0,T;L^q)}
+\bigg(\sum\tau \bigg\|\frac{[u_\tau]}{\tau}\bigg\|_{L^q}^p\bigg)^{\frac{1}{p}}
+\|\Delta u_\tau\|_{L^p(0,T;L^q)}
\leq
C\ln\bigg(\frac{T}{\tau}\bigg)\|f\|_{L^p(0,T;L^q)},
\end{align*}
where $u_\tau$ denotes the DG solution of the heat equation,
$\partial_tu_\tau$ denotes the piecewise time derivative
of $u_\tau$, and the summation extends over all
jumps in the time interval $[0,T]$.  The discontinuous Galerkin method is closely related to the Radau IIA implicit Runge--Kutta method, but the proof given in \cite{LB15} is very different from the proof of Theorem~\ref{RKLinfty}.

\section{An application of the
discrete maximal $L^p$-regularity}
\label{section: applications}

In this section, we illustrate how to apply
the discrete maximal $L^p$-regularity
to derive error estimates and regularity uniform
in the stepsize $\tau$ of the time-discrete solution
for nonlinear parabolic equations.
In this process, we shall see the
superiority of the maximal $L^p$-regularity approach over
the widely used $L^2$-norm approach for strongly nonlinear problems.

We illustrate our idea by considering
the nonlinear parabolic equation
\begin{align}
&\frac{\partial u}{\partial t}-\Delta u=f(u,\nabla u)
&&\mbox{in}\,\,\,\Omega, \label{HM1}\\
&\frac{\partial u}{\partial \nu}=0
&&\mbox{on}\,\,\,\partial\Omega, \label{HM2}\\[3pt]
&u=u_0
&&\mbox{at}\,\,\,t=0 , \label{HM3}
\end{align}
on a smooth bounded domain $\Omega\subset\R^d$, where ${\partial u}/{\partial \nu}$ denotes the normal derivative on the
boundary $\partial\Omega$. We assume that $f:\R\times\R^d\to \R$ is a smooth pointwise nonlinearity, appearing as $f(u(x,t),\nabla u(x,t))$ in \eqref{HM1}.
Examples of such equations include
Burgers' equation (where $f=u\cdot\nabla u$)
and the harmonic map heat flow (where
$f=u|\nabla u|^2$). We will assume that this problem has a sufficiently regular solution, but we will not impose growth conditions on the nonlinearity $f$.

We consider time discretization by the backward Euler scheme
\begin{align}
&\frac{u_n-u_{n-1}}{\tau}-\Delta u_n=f(u_n,\nabla u_n)
&&\mbox{in}\,\,\,\Omega, && n\geq 1, \label{DHM1}\\
&\frac{\partial u_n}{\partial \nu}=0
&&\mbox{on}\,\,\,\partial\Omega, &&
n\geq 1, \label{DHM2} \\[3pt]
&\hbox{with starting value } u_0 . \label{DHM3}
\end{align}

Extensions to full space-time discretizations are
also discussed. Since the extension to higher-order time discretization methods satisfying maximal regularity estimates is straightforward, we just consider the backward Euler method for simplicity of presentation.

In the following, for any sequence
$v=(v_n)_{n=1}^N$ of functions in $L^q(\Omega)$ and a given stepsize $\tau>0$ we consider the scaled $\ell^p$-norm
$$
\|v\|_{L^p(L^q)}
=\bigg(\sum_{n=1}^N\tau \|v_n\|_{L^q}^p\bigg)^{\frac{1}{p}} ,
$$
for $1\leq p<\infty$, which is the
$L^p(0,N\tau;L^q(\Omega))$-norm of the
piecewise constant function
that takes the value $v_n$ on
$(t_{n-1},t_n)$. 
We write similarly
$
\displaystyle
\|v\|_{L^\infty(L^q)} = \max_{1\le n\le N} \|v_n\|_{L^q}.
$

\begin{theorem}
If the nonlinearity $f:\R\times \R^d\to\R$ is continuously differentiable
(here we do not assume any growth condition), and if the exact solution of \refe{HM1}-\refe{HM3}
satisfies $\partial_{tt}u\in L^p(0,T;L^p)$
and $u\in L^p(0,T;W^{2,p})$ for some $T>0$ and for some $p$ with $2+d<p<\infty$,
then there exist $\tau_0>0$ and $C_0>0$ (which depend on $T$)
such that for $0<\tau\le\tau_0$ and $N\tau\le T$, the errors
$$
e_n=u_n-u(\cdot,t_n)
\quad\hbox{ and }\quad\dot e_n= \frac{e_n-e_{n-1}}\tau
$$
of the time-discrete solution given by
\refe{DHM1}-\refe{DHM3} are bounded by
\begin{align}\label{LpW2p}
&\|(\dot e_{n})_{n=1}^N\|_{L^p(L^p)}+\|(\Delta e_n)_{n=1}^N\|_{L^p(L^p)}
\leq C_0\tau ,
\\[1mm]
 \label{LinfW1inf}
& \|(e_n)_{n=1}^N\|_{L^\infty(W^{1,\infty})}
\leq C_0\tau .
\end{align}
\end{theorem}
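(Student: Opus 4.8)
The plan is to read the error recursion as a backward Euler discretization of the \emph{linear} heat equation with a forcing term, to control that forcing by the discrete maximal $\ell^p$-regularity of Theorem~\ref{theorem: IE}, and to close the nonlinear estimate by a continuation (bootstrap) argument that is made possible by the Sobolev embedding $W^{2-2/p,p}(\Omega)\hookrightarrow W^{1,\infty}(\Omega)$, valid precisely because $p>d+2$. First I would set $U_n=u(\cdot,t_n)$ and subtract the scheme \refe{DHM1} from \refe{HM1} evaluated at $t_n$. Since $u$ solves the PDE, the consistency defect is the usual backward Euler truncation error $d_n=(U_n-U_{n-1})/\tau-\partial_t u(t_n)$, and the Taylor remainder formula together with $\partial_{tt}u\in L^p(0,T;L^p)$ gives $\|(d_n)_{n=1}^N\|_{L^p(L^p)}\le C\tau$. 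The error then satisfies, with $e_0=0$,
\begin{equation*}
\dot e_n-\Delta e_n=\big(f(u_n,\nabla u_n)-f(U_n,\nabla U_n)\big)-d_n,\qquad n\ge 1,
\end{equation*}
which is exactly the scheme \refe{Euler} for $A=\Delta$, the Neumann Laplacian, which has maximal $L^p$-regularity on $L^p(\Omega)$.

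Second, I would linearize the nonlinearity under an a priori assumption. Writing
\begin{equation*}
f(u_n,\nabla u_n)-f(U_n,\nabla U_n)=g_n\,e_n+h_n\cdot\nabla e_n,
\end{equation*}
with $g_n,h_n$ the integrated partial derivatives of $f$ along the segment joining $(U_n,\nabla U_n)$ to $(u_n,\nabla u_n)$, the regularity of the exact solution makes $U_n,\nabla U_n$ uniformly bounded, so that on the set where $\|e_n\|_{W^{1,\infty}}\le 1$ the factors $g_n,h_n$ are bounded by a constant $L$ depending only on $f$. Hence the forcing $r_n$ obeys $\|r_n\|_{L^p}\le L(\|e_n\|_{L^p}+\|\nabla e_n\|_{L^p})+\|d_n\|_{L^p}$, with \emph{no} growth condition on $f$ entering.

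Third, I would apply Theorem~\ref{theorem: IE} on each interval $[0,t_m]$ and bound the resulting forcing in $\ell^p$, obtaining
\begin{equation*}
\|(\dot e_n)_{n=1}^m\|_{L^p(L^p)}+\|(\Delta e_n)_{n=1}^m\|_{L^p(L^p)}\le C\big(\|(e_n)_{n=1}^m\|_{L^p(L^p)}+\|(\nabla e_n)_{n=1}^m\|_{L^p(L^p)}\big)+C\tau.
\end{equation*}
The gradient term is absorbed with the interpolation/elliptic-regularity estimate $\|\nabla v\|_{L^p}\le\epsilon(\|\Delta v\|_{L^p}+\|v\|_{L^p})+C_\epsilon\|v\|_{L^p}$ for small $\epsilon$, leaving $E_m:=\|(\dot e_n)_{n=1}^m\|_{L^p(L^p)}+\|(\Delta e_n)_{n=1}^m\|_{L^p(L^p)}\le C\|(e_n)_{n=1}^m\|_{L^p(L^p)}+C\tau$. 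Using $e_n=\tau\sum_{j=1}^n\dot e_j$ and Hölder's inequality to bound $\|(e_n)_{n=1}^m\|_{L^p(L^p)}^p$ by $C\sum_{n=1}^m\tau\,E_n^p$, this becomes a discrete Gronwall inequality, which (for $\tau\le\tau_0$, absorbing the diagonal term) yields $E_m\le C_0\tau$ uniformly in $m$; this is \refe{LpW2p}.

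Finally, for \refe{LinfW1inf} I would invoke the discrete trace estimate attached to maximal regularity: since $e_0=0$, one has $\max_n\|e_n\|_{W^{2-2/p,p}}\le C\,E_N\le C_0\tau$, and then $W^{2-2/p,p}(\Omega)\hookrightarrow W^{1,\infty}(\Omega)$, valid because $2-2/p-d/p>1\Leftrightarrow p>d+2$, gives $\|(e_n)_{n=1}^N\|_{L^\infty(W^{1,\infty})}\le C_0\tau$. Choosing $\tau_0$ with $C_0\tau_0\le\tfrac12$ makes the a priori bound $\|e_n\|_{W^{1,\infty}}\le 1$ self-improving, so a continuation argument in $m$ shows it holds for all $n\le N$, closing the estimate. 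I expect this last closing step to be the main obstacle: the linearization requires an $L^\infty$-in-space control of the error one full derivative stronger than a naive energy estimate delivers, and only the combination of discrete maximal $\ell^p$-regularity with the embedding $W^{2-2/p,p}\hookrightarrow W^{1,\infty}$ — available exactly in the regime $p>d+2$ — lets the maximal-regularity norm (of size $O(\tau)$) dominate the $W^{1,\infty}$-norm used to bound the nonlinearity, so that the bootstrap closes. Establishing the discrete trace estimate uniformly in $\tau$, and verifying that a single backward Euler step cannot push $\|e_n\|_{W^{1,\infty}}$ across the threshold $1$, are the two technical points needing care.
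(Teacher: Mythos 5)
Your proposal follows essentially the same route as the paper: error equation with the backward Euler defect, linearization of $f$ under the a priori bound $\|e_n\|_{W^{1,\infty}}\le 1$, discrete maximal $\ell^p$-regularity from Theorem~\ref{theorem: IE} plus interpolation to absorb the gradient term, a discrete Gronwall argument for \refe{LpW2p}, and a bootstrap to remove the a priori assumption. The only divergence is the final upgrade to \refe{LinfW1inf}: you invoke a ($\tau$-uniform) discrete trace estimate into $W^{2-2/p,p}(\Omega)$ followed by the spatial embedding $W^{2-2/p,p}\hookrightarrow W^{1,\infty}$, whereas the paper passes to the piecewise linear interpolant $\widetilde e$ and uses the anisotropic embedding $W^{1,p}(0,T;L^p)\cap L^p(0,T;W^{2,p})\hookrightarrow L^\infty(0,T;W^{1,\infty})$; both hinge on exactly $p>d+2$, and the trace estimate you flag as needing care is most easily obtained by applying the continuous trace theorem to that same interpolant, so the two arguments are equivalent in substance.
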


\begin{proof}
We rewrite the equations \refe{HM1}-\refe{HM3} for the exact solution $u(t)=u(\cdot,t)$ as
\begin{align}
&\frac{u(t_n)-u(t_{n-1})}{\tau}-\Delta u(t_n)=f(u(t_n),\nabla u(t_n))
+d_n
&&\mbox{in}\,\,\,\Omega, && n\geq 1, \label{THM1}\\
&\frac{\partial u(t_n)}{\partial\nu}=0
&&\mbox{on}\,\,\,\partial\Omega, &&
n\geq 1, \label{THM2} \\[3pt]
&u(t_0)=u_0 , \label{THM3}
\end{align}
where the defect $d_n=(u(t_n)-u(t_{n-1}))/\tau-\partial_t u(t_n)$
is the truncation error due to the
time discretization, satisfying
\begin{align*}
\|(d_n)_{n=1}^N\|_{L^p(L^p)}
&\leq C\|\partial_{tt}u\|_{L^p(0,T;L^p)}\,\tau .
\end{align*}

Comparing \refe{DHM1}-\refe{DHM3}
with \refe{THM1}-\refe{THM3}, we see that
the error $e_n=u_n-u(t_n)$ satisfies
\begin{align}
&\dot e_n-\Delta e_n=f(u_n,\nabla u_n)
-f(u(t_n),\nabla u(t_n)) - d_n
&&\mbox{in}\,\,\,\Omega, && n\geq 1, \label{EHM1}
\\
&\frac{\partial e_n}{\partial \nu}=0
&&\mbox{on}\,\,\,\partial\Omega, &&
n\geq 1, \label{EHM2} 
\\[3pt] 
&e_0=0 . \label{EHM3}
\end{align}
Let $M=\|u\|_{L^\infty(0,T;W^{1,\infty})}$
and define the function
$$
\rho(s)=\sup_{\begin{subarray}{ll}
|y|\leq s\\
|z|\leq s\\
x\in\Omega
\end{subarray}}
\bigg(\bigg|\frac{\partial f}{\partial y}(y,z,x)\bigg|
+\bigg|\frac{\partial f}{\partial z}(y,z,x)\bigg|  \bigg)
$$
for $s>0$.
Since the Neumann Laplacian $\Delta$
has maximal $L^p$-regularity,
Theorem \ref{theorem: IE} implies 
that $e=(e_n)_{n=1}^N$ is bounded by
\begin{align*}
&\|\dot e \|_{L^p(L^p)}+\|\Delta e\|_{L^p(L^p)} \nn\\
&\leq C\|\bigl( f(u_n,\nabla u_n)
-f(u(t_n),\nabla u(t_n)\bigr)_{n=1}^N\|_{L^p(L^p)}
+C\|d\|_{L^p(L^p)} \nn\\
&\leq C\rho(M+\|e\|_{L^\infty(W^{1,\infty})})
\|e\|_{L^p(W^{1,p})}
+C\tau  ,
\end{align*}
where we further estimate
$$
\|e\|_{L^p(W^{1,p})}  \le \epsilon \|e\|_{L^p(W^{2,p})}  + C_\epsilon \|e\|_{L^p(L^p)}.
$$
Suppose that
\begin{align}\label{LinfW}
\|e\|_{L^\infty(W^{1,\infty})}\leq 1 .
\end{align}
Then by choosing $\epsilon$ small enough the last
three inequalities imply
\begin{align*}
&\|\dot e\|_{L^p(L^p)}+\|\Delta e\|_{L^p(L^p)}
\leq C\|e\|_{L^p(L^p)}+C\tau .
\end{align*}
Since
$$
\max_{1\leq k\leq n}\|e_k\|_{L^p}
\leq \sum_{k=1}^n\tau \|\dot e_k\|_{L^p}
= \|\dot e\|_{L^1(L^p)}  \le T^{1-1/p}  \|\dot e\|_{L^p(L^p)}
$$
it follows that
\begin{align*}
\|e\|_{L^\infty(L^p)}+\|\Delta e\|_{L^p(L^p)}
&\leq C\|e\|_{L^p(L^p)}+C\tau \nn\\
&\leq \epsilon\|e\|_{L^\infty(L^p)}
+C_\epsilon\|e\|_{L^1(L^p)}+C\tau ,
\end{align*}
Since this holds for every $N$ with $N\tau\le T$,  we derive by Gronwall's inequality that\\
$
\|e\|_{L^\infty(L^p)} \le C_T\,\tau,
$
which then yields \eqref{LpW2p}.
This also implies
\begin{align*}
&\|\widetilde e\|_{W^{1,p}(0,T;L^p)}
+\|\widetilde e\|_{L^p(0,T;W^{2,p})}
\leq C\tau ,
\end{align*}
where $\widetilde e$ is the piecewise linear
interpolation of $e_n$, $n=1,\dots,N$, at the times~$t_n$.
Since
\begin{align*}
W^{1,p}(0,T;L^p)\cap L^p(0,T;W^{2,p})
&\hookrightarrow
W^{1-\theta,p}(0,T;W^{2\theta,p}) \\
&\hookrightarrow
C^{1-\theta-1/p}([0,T];W^{2\theta-d/p,\infty})
\end{align*}
for any $\theta\in (0,1)$,
when $p>2+d$ there exists $\theta\in(0,1)$
such that $$1/2+d/(2p)<\theta<1-1/p$$ and so
$W^{1,p}(0,T;L^p)\cap L^p(0,T;W^{2,p})
\hookrightarrow
L^\infty(0,T;W^{1,\infty})$. Hence, we have \eqref{LinfW1inf}.
Overall, from \refe{LinfW} one can derive
\refe{LinfW1inf}.
Therefore, by a fixed point argument one readily obtains that
there exists a positive constant $\tau_0$ such that
when $\tau<\tau_0$ we have
\refe{LpW2p}--\refe{LinfW1inf},
without assuming \refe{LinfW}.
This completes the proof of the theorem. 
\end{proof}

\begin{remark}\rm
The key argument of the above proof is that $\tau$-uniform discrete maximal $\ell^p$-regularity allows us to control the ${L^\infty(W^{1,\infty})} $-norm of the error, and hence of the numerical solution. In contrast, the logarithmically quasi-maximal $\ell^\infty$-regularity bounds of Section~\ref{section: ell infty est} are not sufficient to control the ${L^\infty(W^{1,\infty})} $-norm of the numerical solution uniformly in $\tau$ on bounded time intervals, because the logarithmic factor harms the use of the Gronwall inequality. The $\ell^\infty$-regularity bounds of Section~\ref{section: ell infty est} can, however, be used to refine the error bounds. Since we know already that \eqref{LinfW1inf} holds, we obtain from Theorem~\ref{BDFLinfty} on $X=C(\bar\Omega)$ applied to the error equations \eqref{EHM1}--\eqref{EHM3} that
$$
\| \dot e \|_{L^\infty(L^\infty)} + \| \Delta e \|_{L^\infty(L^\infty)} 
\le C' \, \log N\, \bigl(\rho (M+1) \| e \|_{L^\infty(W^{1,\infty})}+ \| d \|_{L^\infty(L^\infty)}\bigr),
$$
which directly yields, under the additional condition that $u \in C^2([0,T],C(\bar\Omega))$,
\begin{equation}
 \| (\dot e_n)_{n=1}^N \|_{L^\infty(L^\infty)} + \| (\Delta e_n)_{n=1}^N \|_{L^\infty(L^\infty)} 
\le C \tau\, \log N.
\end{equation}
\end{remark}

\begin{remark}\rm
Uniform regularity estimates such
as \refe{LpW2p} have important applications in
error estimates of  full
discretizations, with finite element  methods for the spatial discretization.
In the following, let us denote for brevity $u_h^\tau=(u_{h,n})_{n=1}^N$ the fully discrete numerical solution, $u^\tau=(u_{n})_{n=1}^N$ the result of the implicit Euler time discretization given by \refe{DHM1}-\refe{DHM3},
and $u=(u(t_n))_{n=1}^N$ the sequence of exact solution values of the nonlinear parabolic problem \refe{HM1}-\refe{HM3}.
Typically, in order to avoid any grid-ratio
condition in deriving the error estimates,
the error of the fully discrete method
can be decomposed into two parts
\cite{LS13}:
\begin{align*}
\|u_h^\tau - u \|_{L^p(W^{1,p})}
\leq \|u_h^\tau - u^\tau\|_{L^p(W^{1,p})}
+\|u^\tau - u\|_{L^p(W^{1,p})},
\end{align*}
where the first part is expected to be $O(h)$,
uniformly in $\tau$.
For such nonlinear problems as \refe{HM1}-\refe{HM3},
the main difficulty in the error estimates is
to prove the boundedness
$\|u_h^\tau\|_{L^\infty(W^{1,\infty})}\le C$
for the numerical solution.
Let $I_h$ denote the Lagrange interpolation operator.
Under the regularity of \refe{LpW2p},
the first part of the error can be proved
in the following way:
by assuming that
\begin{align} \label{UhWinf}
\|I_hu^\tau-u_h^\tau\|_{L^\infty(W^{1,\infty})} \leq 1,
\end{align}
via $\tau$- and $h$-uniform discrete maximal $\ell^p$-regularity estimates on the finite element space one can prove the $\tau$-independent error estimate (with $D_\tau$ denoting the backward difference quotient operator
and $W^{-1,p}$ denoting the dual space of $W^{1,p'}$)
\begin{align}\label{ErrW1p}
&\|D_\tau(I_hu^\tau-u_h^\tau)\|_{L^p(W^{-1,p})}
+\|I_hu^\tau-u_h^\tau\|_{L^p(W^{1,p})}
\leq Ch ,
\end{align}
and by using the inverse inequality
\begin{align*}
&\|I_hu^\tau-u_h^\tau\|_{L^p(W^{1,\infty})}
\leq Ch^{-d/p}\| I_hu^\tau-u_h^\tau\|_{L^p(W^{1,p})}
\leq Ch^{1-d/p}, \\
&\|D_\tau(I_hu^\tau-u_h^\tau)\|_{L^p(W^{1,\infty})}
\leq Ch^{-2-d/p}\|D_\tau(I_hu^\tau-u_h^\tau)\|_{L^p(W^{-1,p})}
\leq Ch^{-1-d/p} ,
\end{align*}
one recovers a better $L^\infty(W^{1,\infty})$-estimate
via the interpolation inequality
\begin{align} \nonumber
\|I_hu^\tau-u_h^\tau\|_{L^\infty(W^{1,\infty})}
&\leq
\|I_hu^\tau-u_h^\tau\|_{L^p(W^{1,\infty})}^{1-1/p}
\|D_\tau(I_hu^\tau-u_h^\tau)\|_{L^p(W^{1,\infty})}^{1/p}
\\ \label{ErrW1p2}
&\leq Ch^{1-(2+d)/p} .
\end{align}
When $p>2+d$, one can conclude that
there exists a positive constant
$h_0>0$ such that when $h<h_0$
the inequalities \refe{ErrW1p}-\refe{ErrW1p2} hold,
without pre-assuming \refe{UhWinf}.
\end{remark}

\begin{remark}\rm
We mention that
the often used $l^\infty(L^2)$-norm approach does not work
when the nonlinearity is strong enough.
Specifically,
if one uses the standard $l^\infty(L^2)$-norm error estimate,
then by assuming \refe{UhWinf} one can only prove
\begin{align*}
&\| I_hu^\tau-u_h^\tau\|_{L^\infty(L^{2})}
+h \| I_hu^\tau-u_h^\tau \|_{L^\infty(H^{1})}
\leq Ch^2
\end{align*}
for the linear finite element method.
The $L^\infty(W^{1,\infty})$ error of the numerical solution
cannot be recovered for $d\geq 2$:
\begin{align*}
&\| I_hu^\tau-u_h^\tau \|_{L^\infty(W^{1,\infty})}
\leq Ch^{-d/2}\| I_hu^\tau-u_h^\tau \|_{L^\infty(H^1)}
\leq Ch^{1-d/2} .
\end{align*}
This shows an advantage of the maximal $L^p$-regularity
approach for the analysis of strongly nonlinear problems.

Of course, if the nonlinearity is not strong,
then one only needs to assume
\refe{UhWinf} with some $L^\infty(W^{1,q})$ norm,
\begin{align*}
&\|I_hu^\tau-u_h^\tau\|_{L^\infty(W^{1,q})}
\leq Ch^{d/q-d/2}\|I_hu^\tau-u_h^\tau\|_{L^\infty(H^1)}
\leq Ch^{1+d/q-d/2}   ,
\end{align*}
and this weaker norm can thus be recovered
if $q<2d/(d-2)$.
\end{remark}

\begin{remark} \rm
Since the approach via discrete maximum $\ell^p$-regularity allows us to control the $\ell^\infty(W^{1,\infty})$ error of the numerical solution, it works equally well for nonlinearities $f(u,\nabla u)$ that are defined only in a subregion of $\R\times\R^d$, provided that the exact solution of the parabolic problem stays in that subregion. For example, this includes nonlinearities with singularities (e.g., rational functions) or functions that are defined only for positive $u$ or for
$\nabla u$ in a cone.
\end{remark}

\section*{Acknowledgement} The research stay of
Buyang Li at the University of T\"ubingen is
funded by the Alexander von Humboldt Foundation.
The work of Bal\'azs Kov\'acs is funded by
Deutsche Forschungsgemeinschaft, SFB 1173.

\pagebreak[3]

\bibliographystyle{alpha}

\begin{thebibliography}{9}

\bibitem{ACL}
R.P. Agarwal, C. Cuevas, and C. Lizama:
{\em Regularity of Difference Equations on Banach Spaces.}
Springer International Publishing, Switzerland, 2014.

\bibitem{AMN11}
 G. Akrivis, Ch. Makridakis, and R. H. Nochetto:
Galerkin and Runge--Kutta methods: unified formulation,
a posteriori error estimates and nodal superconvergence. 
{\em Numer. Math.}, 118 (2011), pp. 429--456.

\bibitem{Amann}
H. Amann: {\em Linear and Quasilinear Parabolic Problems.}
Vol. I, Abstract linear theory. Birkh\"auser Boston Inc., Boston, MA,
1995.


\bibitem{Burkholder81}
D.L. Burkholder:
A geometrical condition that implies the
existence of certain singular integrals
of Banach-space-valued functions.
{\em Proc. Conf. Harmonic Analysis,}
University of Chicago, March 1981.

\bibitem{Bourgain83}
J. Bourgain:
Some remarks on Banach spaces in which
martingale difference sequences are unconditional.
{\em Arkiv f\"{o}r Matematik}, 21 (1983), pp. 163--168.

\bibitem{Blunck01}
S. Blunck:
Maximal regularity of discrete and continuous time evolution equations.
{\em Studia Math.}, 146 (2001), pp.157--176.

\bibitem{Blunck01-2}
S. Blunck: Analyticity and discrete maximal regularity on $L^p$ spaces.
{\em J. Funct. Anal.} 183 (2001), pp. 211--230.


\bibitem{CPSW} P. Cl\'ement, B. de Pagter,
F. Sukochev and H. Witvliet: Schauder decompositions and multiplier theorems.
{\em Studia Math.}, 138 (2000), pp. 135--163.


\bibitem{Dahlquist} G. Dahlquist:
Convergence and stability in the numerical integration of ordinary differential equations.
\emph{Math. Scand.}, 4 (1956), pp. 33--53.

\bibitem{Dahlquist63} G. Dahlquist: A special stability problem for linear multistep methods. \emph{BIT}, 3 (1963), pp. 27--43.


\bibitem{deSimon} L. de Simon: Un'applicazione della teoria degli integrali singolari allo studio delle equazioni differenziali lineari astratte del primo ordine. \emph{Rend. Sem. Mat. Univ. Padova}, 34 (1964), pp. 205--223.

\bibitem{Gei1}
M. Geissert: Discrete maximal $L^p$  regularity for finite
element operators. {\em SIAM J. Numer. Anal.},
44 (2006), pp. 677--698.

\bibitem{Gei2}
M. Geissert: Applications of discrete  maximal $L^p$
regularity for finite element operators.
{\em Numer. Math.}, 108 (2007),
pp. 121--149.

\bibitem{HairerWannerII}
E. Hairer and G. Wanner: {\em Solving Ordinary Differential Equations II.: Stiff and   differential--algebraic problems\/}, {S}econd edition, Springer, Berlin, 1996.

\bibitem{KL00}
N. J. Kalton and G. Lancien:
A solution to the problem of $L^p$-maximal
regularity. {\em Math. Z.},
235 (2000), pp. 559--568.

\bibitem{KW04}
P.C. Kunstmann and L. Weis:
Maximal $L_p$-regularity for Parabolic Equations,
Fourier Multiplier Theorems and $H^\infty$-functional Calculus.
{\em Functional Analytic Methods for Evolution Equations.
Lecture Notes in Mathematics}, vol. 1855 (2004), pp.~65--311.

\bibitem{LSU68}
O.A. Ladyzhenskaya, V.A. Solonnikov, and N.N Ural'tseva:
{\em Linear and quasi-linear equations of parabolic type}.
Translations of Mathematical Monographs 23,
Providence, RI, AMS, 1968.

\bibitem{LB15}
D. Leykekhman and B. Vexler:
Discrete maximal parabolic regularity for
Galerkin finite element methods.
{\em Preprint}, http://arxiv.org/abs/1505.04808

\bibitem{Li}
B. Li: Maximum-norm stability and maximal $L^p$
regularity of FEMs for parabolic equations
with Lipschitz continuous coefficients.
{\em Numer. Math.}, 131 (2015), pp. 489--516.

\bibitem{Li-GL}
B. Li:
Convergence of a decoupled mixed FEM for
the dynamic Ginzburg--Landau equations
in nonsmooth domains
with incompatible initial data.
{\em Submitted.}

\bibitem{LS13}
B. Li and W. Sun: Error analysis of
linearized semi-implicit
Galerkin finite element methods for nonlinear
parabolic equations. {\em
Int. J. Numer. Anal. Modeling},
10 (2013), pp. 622--633.

\bibitem{LS15} B. Li and W. Sun:
Regularity of the diffusion-dispersion tensor and error
analysis of FEMs for a porous media flow.
{\em SIAM J. Numer. Anal.},
53 (2015), pp. 1418--1437.

\bibitem{Lions} P.L. Lions:
{\em Mathematical Topics in Fluid Mechanics.
Volume 1. Incompressible Models. }
Clarendon Press, Oxford, 1996.


\bibitem{LO93}
Ch. Lubich and A. Ostermann:
Runge--Kutta methods for parabolic
equations and convolution quadrature.
{\em Math. Comp.}, 60 (1993), pp. 105--131.

\bibitem{Lubich-cqrevisited} Ch. Lubich:
Convolution quadrature revisited.
{\em BIT Numer. Math.}, 44 (2004), pp. 503--514.

\bibitem{Lunardi}
A. Lunardi:
{\em Analytic Semigroups and Optimal
Regularity in Parabolic Problems}.
Birkh\"{a}user Verlag, Basel, 1995.


\bibitem{Portal}
P. Portal: Maximal regularity of evolution equations
on discrete time scales.
{\em J. Math. Anal. Appl.}, 304 (2005), pp. 1--12.

\bibitem{Weis1}
L. Weis: A new approach to maximal $L^p$-regularity.
{\em Evolution Equations and Their 
Applications in Physical and Life Sciences},
Lecture Notes in Pure and Applied
Mathematics 215,
Marcel Dekker, New York (2001), pp. 195--214.

\bibitem{Weis2}
L. Weis: Operator-valued Fourier multiplier  theorems and
maximal $L^p$-regularity. {\em Math. Annal.}, 319 (2001), pp. 735--758.

\end{thebibliography}

\end{document}